\newenvironment{Cases}{\left.\begin{cases}}{\end{cases}\right\}}
\theoremstyle{definition}
\newtheorem{definition}{Definition}[section]
\newtheorem{example}[definition]{Example}
\newtheorem{remark}[definition]{Remark}
\theoremstyle{plain}
\newtheorem{theorem}[definition]{Theorem}
\newtheorem{lemma}[definition]{Lemma}
\newtheorem{proposition}[definition]{Proposition}
\newtheorem{corollary}[definition]{Corollary}
\DeclareMathOperator{\diag}{diag}
\DeclareMathOperator{\id}{id}
\DeclareMathOperator{\Hom}{Hom}
\DeclareMathOperator{\GL}{GL}
\newcommand{\QQ}{\mathbb{Q}}
\newcommand{\FF}{\mathbb{F}}
\newcommand{\ZZ}{\mathbb{Z}}
\newcommand{\NN}{\mathbb{N}}
\newcommand{\IQ}{\mathbb{Q}}
\newcommand{\IZ}{\mathbb{Z}}
\newcommand{\bh}{\mathbf{h}}
\newcommand{\bu}{\mathbf{u}}
\newcommand{\bv}{\mathbf{v}}
\newcommand{\bw}{\mathbf{w}}
\newcommand*{\defeq}{\mathrel{\rlap{%
                     \raisebox{0.3ex}{\(\m@th\cdot\)}}%
                     \raisebox{-0.3ex}{\(\m@th\cdot\)}}%
                     =}
\title{Hensel lifting algorithms for quadratic forms}
\author{Simon Brandhorst}
\address{Universit\"at des Saarlandes, Campus E2.4, 66123 Saarbr\"ucken, Germany}
\email{brandhorst@math.uni-sb.de}
\author{Davide Cesare Veniani}
\address{Universität Stuttgart \\ Pfaffenwaldring 57 \\ 70569 Stuttgart \\ Germany}
\email{davide.veniani@mathematik.uni-stuttgart.de}
\date{\today}
\keywords{Hensel lifting, quadratic lattice, discriminant group, orthogonal group, generator}
\thanks{
S.B. is funded by the Deutsche Forschungsgemeinschaft (DFG, German Research Foundation) -- Project-ID 286237555 -- TRR 195.}
\newcommand\smvee{\raise0.9ex\hbox{\(\scriptscriptstyle\vee\)}}
\begin{document}

\begin{abstract}
We provide an algorithm to compute generators of the orthogonal group of the discriminant group associated to an integral quadratic lattice over the integers.
We give a closed formula for its order.
\end{abstract}

\maketitle

\section{Introduction}
Let \(L\) be an integral quadratic lattice defined over the ring of integers \(\IZ\). 
The discriminant group of~\(L\) is a finite abelian group \(L^\sharp\) defined as the quotient of the dual lattice~\(L^\vee\) by~\(L\) itself. The order of \(L^\sharp\) is equal to the discriminant of any Gram matrix of~\(L\). The discriminant group inherits from \(L\) a symmetric bilinear form, or, if \(L\) is even, a quadratic form. 
In this note, we provide an algorithm which computes generators of the orthogonal group \(O(L^\sharp)\), as well as a formula for its order in \Cref{thm:discr-order}.

Our motivation is the following.
Let \(L\) be an even unimodular lattice unique in its genus, and let \(M \subseteq L\) be any primitive sublattice. Denote its orthogonal complement by \(N = M^\perp\). 
By results of Nikulin \cite[Prop.~1.5.1]{nikulin:bilinear}, the group \(L/(M\oplus N)\) is the graph of an anti-isometry \(\gamma\colon M^\sharp \to N^\sharp\). The classes of primitive sublattices of \(L\) up to \(O(L)\) which are isometric to \(M\) with orthogonal complement isometric to \(N\) are parametrized by the orbit space \((O(M) \times O(N)) \backslash \gamma O(M^\sharp)\) where we let \(O(N)\) act on \(\gamma O(M^\sharp) = \{\gamma \circ g \mid g \in O(M^\sharp)\}\) via the natural map \(O(N)\to O(N^\sharp)\) and likewise for \(O(M)\).
The classes of primitive embeddings of \(M\) into \(L\) are parametrized by \(O(N) \backslash \gamma O(M^\sharp)\) \cite[Cor.~1.15.2]{nikulin:bilinear}. 

An important instance of this problem is the computation of representatives for the conjugacy classes of involutions in \(O(L)\). Here, we require \(M\) to be \(2\)-elementary, the involution acting as \(\id\) on \(M\) and as \(-{\id}\) on \(N\).

Having generators for \(O(M^\sharp)\) and its order makes these double cosets computationally accessible.  Indeed, while \(O(M)\) may be infinite, its image in \(O(M^\sharp)\) is finite and can be computed with the help of the strong approximation theorem going back to Miranda and Morrison. See \cite{brandhorst-hofmann} for an account.

We illustrate the results that can be obtained by our method starting with two known examples. The computations, which also yield explicit matrices, are carried out in the computer algebra system OSCAR \cite{OSCAR}.

\begin{example}
The even unimodular lattice of signature \((0,8)\) is the \(E_8\) lattice. There are \(9\) conjugacy classes of involutions in \(O(E_{8})\).

The even unimodular lattice of signature \((1,9)\) is the lattice \(E_{10}\). There are \(29\) conjugacy classes of involutions in \(O(E_{10})\).
\end{example}

\begin{example}
For a less trivial example, let \(M=A_2(4) \oplus A_1(4)\). As a group, \(M^\sharp\) is isomorphic to \((\ZZ/4\ZZ )^2 \times (\ZZ/8\ZZ ) \times (\ZZ/3\ZZ)\). By \Cref{thm:order}, its order is equal to \(\# O(M^\sharp) = 1536\).

Nikulin's theory shows that \(M\) embeds primitively in \(E_8\) and that the genus of its orthogonal complement is \(\mathrm{II}_{(0,5)} 8^{-2} 16^1_7 3^1\). One checks that this genus consists of \(4\) classes \(N_1, \dots, N_4\).  
For \(i \in \{1,2,3,4\}\), the order of \(O(N_i)\) is \(8,8,24,24\) and the order of its image in \(O(M^\sharp)\) is \(4,8,24,24\), respectively. Therefore, there are a total of \(1536(1/4+1/8+1/24+1/24)=704\) primitive embeddings of \(M\) into \(E_8\) up to the action of \(O(E_8)\). 

To compute the number of primitive sublattices of \(E_8\) isometric to \(M\) up to \(O(E_8)\), we compute the glue map and the double cosets and we get a total of \(2+1+1+1=5\) primitive sublattices. 
\end{example}

\subsection*{Outline}
By tensorization, we reduce ourselves to the study of lattices over the ring of \(p\)-adic numbers \(\IZ_p\). For a \(\IZ_p\)-lattice \(L\) and a positive integer \(n\), we provide generators of the group \(O(L/p^nL)\), defined as the image of the natural map \(O(L) \to \GL(L/p^nL)\). 
The study of \(O(L/p^nL)\) is further reduced to the study of \(p\)-elementary quadratic and symmetric bilinear forms. 

In \Cref{sec:2-elementary}, after recalling some general definitions, we fill a small gap in the literature (surely known to experts) by computing the order of the orthogonal groups of \(2\)-elementary quadratic and symmetric bilinear forms (\Cref{prop:order_orthogonal_elementary}). 


{In \Cref{sec:order}, we compute the order of the group \(O(L/p^nL)\) in \Cref{thm:order}, a result originally due to Watson \cite{watson1976} (\Cref{rmk:p-adic_density}) who proved it to compute the \(p\)-adic density of a lattice.

In \Cref{sec:hensel}, which constitutes the core of our paper, we give an algorithm that lifts elements of \(O(L/p^aL)\) to \(O(L/p^bL)\) for arbitrary integers \(a \leq b\). 
The key concept that we define is the notion of \(a\)-approximate triple of matrices in \(\IZ_p^{r \times r}\) (\Cref{def:approximate}).

Finally, in \Cref{sec:generators}, we provide a formula for the order of \(O(L^\sharp)\) in \Cref{thm:discr-order}. Moreover, we explain in detail how to write down generators of \(O(L/p^nL)\) for an arbitrary integer \(n\). 

The algorithms have been implemented in OSCAR \cite{OSCAR}. 

\subsection*{Acknowledgments}
We thank Ljudmila Kamenova, Giovanni Mongardi and Alex\-ei Oblomkov for organizing the workshop ``Hyperkähler quotients, singularities, and quivers'' (January 30--February 3, 2023) at the Simons Center for Geometry and Physics in Stony Brook, NY, which gave us the opportunity to finish this paper.

\section{Orthogonal groups of 2-elementary forms} \label{sec:2-elementary}

Let \(R\) be a commutative ring with identity. Given an \(R\)-module \(F\), an \emph{\(F\)-valued symmetric bilinear form} over \(R\) is a pair \((L,b)\), where \(L\) is an \(R\)-module, and \(b \colon L \times L \rightarrow F\) is a function which is \(R\)-linear in each variable and satisfies \(b(x,y) = b(y,x)\) for all \(x,y \in L\). The form is \emph{non-degenerate} if for every \(x \in L\) with \(x \neq 0\) there exist \(y \in L\) with \(b(x,y) \neq 0\).

An \(F\)-valued \emph{quadratic form} over \(R\) is a pair \((L,q)\), where \(L\) is an \(R\)-module, and \(q \colon L \rightarrow F\) is a function which satisfies \(q(rx) = r^2q(x)\) for all \(r \in R\), \(x \in L\) and for which there exists an \(F\)-valued symmetric bilinear form~\(b\) on \(L\) such that \[q(x+y) = q(x) + q(y) + b(x,y)\] for all \(x,y \in L\). 
The form \(q\) is \emph{non-degenerate} if \(b\) is.

The \emph{orthogonal group} \(O(q)\) (resp. \(O(b)\)) is the group of \(R\)-linear automorphisms \(f \colon L \rightarrow L\) such that \(q(f(x)) = q(x)\) for all \(x \in L\) (resp. \(b(f(x),f(y)) = b(x,y)\) for all \(x,y \in L\)). 
A \emph{\(p\)-elementary} quadratic (resp. symmetric bilinear) form is a non-degenerate \(\QQ/p\ZZ\)-valued quadratic (resp. symmetric bilinear) form on the \(\ZZ/p\ZZ\)-module \((\ZZ/p\ZZ)^a\). 

The aim of this section, which we achieve in \Cref{prop:order_orthogonal_elementary}, is to compute the order of the orthogonal groups of \(2\)-elementary quadratic and bilinear forms. 

\begin{example} 
We denote by \(\bu\) and \(\bv\) the \(2\)-elementary quadratic forms on \((\IZ/2\IZ)^2\) with generators \(x,y\) defined respectively by the formulas
\(q(ax+by) = ab\) and \(q(ax+by) = a^2 + ab + b^2\). We denote by \(\bar \bu\) and \(\bar \bv\) the respective bilinear forms.
We say that a \(2\)-elementary quadratic form \(q\) is \emph{even} if its associated bilinear form is alternating or, equivalently, if 
\(q \cong \bu^{\oplus m}\) or \(q \cong \bu^{\oplus m-1}\oplus \bv\).
\end{example} 

\begin{example} 
The forms \(\bw^1\) and \(\bw^3\) are the \(2\)-elementary quadratic forms on \(\IZ/2\IZ\) with generator \(x\) defined respectively by the formulas \(q(x) = \frac 12\) and \(q(x) = \frac 32\).
Their corresponding bilinear forms are isomorphic and denoted by \(\bar \bw\).
\end{example}

\begin{example} 
For any (odd) prime \(p\), we denote by \(\bh\) the \(p\)-elementary quadratic form on \((\ZZ/p\ZZ)^2\) with generators \(x,y\) defined by \(q(ax+by)=ab\). We say that a \(p\)-elementary quadratic form \(q\) is \emph{hyperbolic} if \(q\cong \bh^{\oplus m}\) for some \(m\).
\end{example}

\begin{remark} \label{rmk:forms_F2}
If \((V,b)\) is a \(2\)-elementary symmetric bilinear form, then \(V\) is a vector space over the field~\(\FF_2\) with \(2\) elements.
Since \(2b(x,y) = b(2x,y) = b(0,y) = 0\), the form~\(b\) takes values in \(\IZ/2\IZ\) and, therefore, can be considered as a symmetric bilinear form over \(\FF_2\). 
The same is true for even \(2\)-elementary quadratic forms.
Similarly, \(p\)-elementary quadratic (or bilinear) forms for \(p\neq 2\) may be seen as forms over \(\FF_p\), because they take values in \(\IZ/p\IZ\).
\end{remark}

\begin{definition} \label{def:transvection}
Let \((V,q)\) be a \(2\)-elementary quadratic form with associated bilinear form~\(b\). 
Given \(v \in V\) with \(b(v,v)=0\), we denote by \(T_v \colon V \rightarrow V\) the \emph{transvection} along~\(v\) defined by \(T_v(x) =  x + b(x,v)v\), where \(b(x,v)\) is viewed as an element of~\(\IZ/2\IZ\) (cf. \Cref{rmk:forms_F2}).
It is straightforward to check that \(T_v \in O(b)\). 
%
\end{definition}

\begin{proposition} \label{prop:order_orthogonal_elementary}
For a \(2\)-elementary quadratic form~\(q\), the orders of \(O(q)\) and \(q^{-1}(\{0\})\) are given in \Cref{table:orders_quadratic}. 
For a \(2\)-elementary symmetric bilinear form~\(b\), the order of \(O(b)\) is given in \Cref{table:orders_bilinear}.  
For a \(p\)-elementary quadratic form~\(q\) with \(p \neq 2\), the order of \(O(q)\) is given in \Cref{table:orders_quadratic_odd}.
\end{proposition}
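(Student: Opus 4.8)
The plan is to reduce the statement to a short list of normal forms, settle the classical cases with known results, and handle the characteristic-\(2\) subtleties by an explicit reduction on the rank. First I would fix representatives for the isomorphism classes. For \(p\neq 2\) every \(p\)-elementary quadratic form is diagonalizable over \(\FF_p\) and is determined by its rank \(a\) and the square class of its discriminant; in even rank \(a=2m\) these are the split form \(\bh^{\oplus m}\) and one non-split form, while in odd rank there is a unique class up to scaling. For \(p=2\) I would invoke the classification of \(2\)-elementary quadratic forms as orthogonal sums of copies of \(\bu,\bv,\bw^1,\bw^3\), reduced modulo the relations \(\bu^{\oplus 2}\cong\bv^{\oplus 2}\) and the oddity-fusion relations among the \(\bw^k\) (for instance \(\bw^1\oplus\bw^1\oplus\bw^1\cong\bw^3\oplus\bv\) and \(\bw^1\oplus\bw^1\oplus\bw^3\cong\bw^1\oplus\bu\)); this leaves, for each rank, the even forms \(\bu^{\oplus m}\) and \(\bv\oplus\bu^{\oplus(m-1)}\) together with a bounded number of odd forms distinguished by a residual invariant. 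For symmetric bilinear forms the non-degenerate ones over \(\FF_2\) are exactly the alternating forms \(\bar\bu^{\oplus m}\) and, for each rank \(a\), the form \(\langle 1\rangle^{\oplus a}\) (using relations such as \(\langle 1\rangle^{\oplus 3}\cong\langle 1\rangle\oplus\bar\bu\)). These reductions are standard (Nikulin, Conway--Sloane), and I would only verify the specific low-rank coincidences I need.

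Next, the classical cases. For \(p\) odd, \(O(q)\) is the finite orthogonal group \(O^{\pm}_{2m}(\FF_p)\) or \(O_{2m+1}(\FF_p)\), and for \(p=2\) with \(q\) even it is \(O^{\pm}_{2m}(\FF_2)\); for an alternating \(2\)-elementary bilinear form \(O(b)\) is the symplectic group \(\mathrm{Sp}_{2m}(\FF_2)\). Their orders are the classical product formulas, obtained by induction on the rank via Witt's extension theorem, using reflections when \(p\) is odd and the transvections \(T_v\) of \Cref{def:transvection} when \(p=2\). The order of \(q^{-1}(\{0\})\) in these cases is the usual count of points on a quadric over a finite field, namely \(p^{2m}\) in odd rank and \(p^{2m-1}+\epsilon\,p^{m-1}\) for a rank-\(2m\) form of type \(\epsilon\in\{\pm 1\}\), which is exactly the quantity recorded in \Cref{table:orders_quadratic}.

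It then remains to treat the odd \(2\)-elementary quadratic forms and the non-alternating bilinear forms \(\langle 1\rangle^{\oplus a}\), which are not classical groups. For \(b=\langle 1\rangle^{\oplus a}\) I would fix an anisotropic vector \(v\), show (again using transvections) that \(O(b)\) acts transitively on the orbit of \(v\) among anisotropic vectors, and conclude \(\#O(b)=\#(\text{orbit})\cdot\#O(v^\perp)\) with \(v^\perp\cong\langle 1\rangle^{\oplus(a-1)}\); solving this recursion from \(\#O(\langle 1\rangle)=1\) gives \Cref{table:orders_bilinear} (equivalently one may use the isomorphism \(O(\langle 1\rangle^{\oplus(2m+1)})\cong\mathrm{Sp}_{2m}(\FF_2)\), valid in characteristic \(2\), together with the description of \(O(\langle 1\rangle^{\oplus 2m})\) as the stabilizer of the all-ones vector). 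For an odd quadratic form \(q\) with associated bilinear form \(b\), the key observation is that \(v\mapsto b(v,v)\) is a nonzero \(\FF_2\)-linear functional, hence equals \(b(\,\cdot\,,c)\) for a canonical vector \(c\) fixed by every isometry; restricting to \(c^\perp\) — and, when \(c\) is isotropic, passing to \(c^\perp/\langle c\rangle\) — exhibits \(O(q)\) as an extension of the orthogonal group of a \(2\)-elementary form of smaller rank by an explicitly computable elementary abelian group. Iterating, the odd-quadratic case reduces to the cases already handled, and \(\#q^{-1}(\{0\})\) is tracked along the way; assembling the pieces yields \Cref{table:orders_quadratic} and \Cref{table:orders_quadratic_odd}.

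The main obstacle is this last step. In characteristic \(2\) the bilinear form of an odd quadratic form is nearly degenerate — a vector with \(b(v,v)=0\) lies in its own orthogonal complement — so Witt cancellation must be applied with care; the form induced on \(c^\perp\) (or on \(c^\perp/\langle c\rangle\)) can switch between the even and odd types and drop the rank by \(1\) or by \(2\) according to whether \(c\) is isotropic and to the value of \(q(c)\); and \(O(q)\) need not act transitively on vectors of a fixed \(q\)-value, so the relevant orbits must be identified by hand. Pinning down the low-rank base cases and the small-form coincidences (such as \(\bw^1\oplus\bw^1\oplus\bw^3\cong\bw^1\oplus\bu\)) is where the bookkeeping is most error-prone; the remainder is routine.
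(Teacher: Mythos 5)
Your plan is essentially the paper's proof: after reducing to normal forms, the non-classical cases are handled via the characteristic vector \(c\) with \(b(x,c)=b(x,x)\) --- your \(c^\perp\) and \(c^\perp/\langle c\rangle\) are exactly the paper's invariant subspaces \(K=\{x\mid b(x,x)=0\}\) and \(K/(K\cap K^\perp)\) --- exhibiting \(O(q)\) as an extension of a classical group by an elementary abelian kernel, with the classical orders quoted from the literature. Just note that the actual content of the tables sits in the steps you defer as bookkeeping: proving surjectivity onto the quotient (which, when \(q(c)\neq 0\), carries only the bilinear form, so the image is a full symplectic group rather than an orthogonal group of a quadratic form) and identifying the kernels explicitly (a single reflection, the \(2^{\dim q'}\) Eichler transformations, and a group of order \(2^{2m+1}\) in the bilinear case, respectively).
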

\begin{proof}
The \(p\)-elementary case for \(p\neq 2\) is classic, see e.g. \cite[(13.3)]{kneser:quadratische_formen} for the formulas.

Let \(q\) be a \(2\)-elementary quadratic form with associated symmetric bilinear form~\(b\). 
Define \(K = \{x \mid b(x,x) = 0 \}\), which is the subspace where \(b\) is alternating. 
Clearly, \(K\), as well as \(K^\perp\) and \(K \cap K^\perp\), are invariant subspaces for both \(O(q)\) and \(O(b)\). 
By the results in \cite[\S 1.8]{nikulin:bilinear} (see also \cite[Chapter IV, §4]{miranda_morrison:embeddings}), we can write \(q \cong q' \oplus q''\), where \(q'\) is an even quadratic form and \[q'' \in \{0, \bw^1, \bw^3, \bw^1 \oplus \bw^1, \bw^3 \oplus \bw^3, \bw^1 \oplus \bw^3\}.\]

If \(q'' = 0\), i.e., if \(q\) is even, then \(q\) can be seen as a quadratic form over \(\FF_2\) (\Cref{rmk:forms_F2}). In particular, the results of \cite[(13.3)]{kneser:quadratische_formen} apply. 

If \(q'' = \bw^\varepsilon\) with \(\varepsilon \in \{1,3\}\), then \(K = q'\), \(K^\perp = q''\) and \(K \cap K^\perp = \{0\}\), so \(O(q) \cong O(q') \times O(q'') \cong O(q')\). 

Suppose now that \(q'' = \bw^\varepsilon \oplus \bw^\varepsilon\) with \(\varepsilon \in \{1,3\}\), and let \(y_1,y_2\) be the generators of the two copies of \(\bw^\varepsilon\). Then, \(K = q' \oplus \langle z \rangle\) and \(K \cap K^\perp = \langle z \rangle\), with \(z = y_1+y_2\). 
Let \(K' = K/(K\cap K^\perp)\).
Restricting to the quotient \(K'\) gives rise to a homomorphism \(O(q) \rightarrow O(b')\), where \(b'\) is the symmetric bilinear form induced by \(b\) on \(K'\). 
(Note that \(q\) does not descend to the quotient.)
We claim that this homomorphism is surjective and that its kernel has two elements, which yields \(\#O(q) = 2\#O(b')\). 
%

Indeed, the orthogonal group \(O(b')\) is in fact a symplectic group and generated by the transvections \(T_s(x)=x+b'(x,s) s\), where \(s\in K'\) \cite[2.1.11]{omeara:symplectic-groups}.
Since \(q(z) = 1\), we can find \(v \in K\) with \(\bar v = s\) and \(q(v)=1\). The reflection \(R_v(x) = x+b(x,v) v\) preserves \(q\) and restricts to \(T_s\). 
Therefore, the homomorphism is surjective. The kernel is generated by the reflection \(R_{z}\), which exchanges \(y_1\) and~\(y_2\).

Suppose now that \(q'' = \bw^1 \oplus \bw^3\) and let \(y_1,y_2\) be the generators of the copies of \(\bw^1\) and \(\bw^3\). As in the previous case, \(K = q' \oplus \langle z \rangle\) and \(K \cap K^\perp = \langle z \rangle\), with \(z = y_1+y_2\). Since now \(q(z) = 0\), restricting to \(K/(K \cap K^\perp) \cong q'\) gives rise to a homomorphism \(O(q) \rightarrow O(q')\), which is surjective. 
%
Let \(f\) be in the kernel. Then, \(f(z) = z\) and \(f(y_1)= a + \gamma z + \delta y_1\) for some \(a \in q'\) and \(\gamma,\delta \in \FF_2\). 
The condition \(q(y_1)=q(f(y_1))\) implies \(\delta =1\) and \(\gamma = q(a)\). 
Moreover, for each \(s \in q'\), we have \(f(s) = s + \alpha z\) some \(\alpha \in \FF_2\). 
The condition \(b(f(s),f(y_1)) = b(s,y_1) = 0\) gives \(\alpha = b(a,s)\). 
This shows that \(f\) is a so called Eichler transformation 
\[E_a(x) = x+b(x,z)a-b(x,a)z-q(a)b(x,z)z.\]
Eichler transformations are isometries and there are \(2^{\dim q'}\) choices for \(a\).

The formulas for \(\# q^{-1}(\{0\})\) are given in \cite[(13.6)]{kneser:quadratische_formen} for the first \(3\) cases. The others are left to the reader.

Let \(b\) be a \(2\)-elementary symmetric bilinear form. We define \(K\) as before. 
By \Cref{rmk:forms_F2}, the results in \cite[(1.20)]{kneser:quadratische_formen} and \cite[Chapter IV, §4]{miranda_morrison:embeddings} apply, so we can write \(b \cong b' \oplus b''\), with \(b' = \bar \bu^{\oplus m}\) and \(b'' \in \{0, \bar \bw, \bar \bw^{\oplus 2}\}.\)

If \(b'' = 0\), then the results of \cite[Chapt. 2]{atlas} apply. 

If \(b'' = \bar \bw\), then \(K = b'\), \(K^\perp = b''\) and \(K\cap K^\perp = \{0\}\), so \(O(b) \cong O(b') \times O(b'') \cong O(b')\).

Finally, suppose that \(b'' = \bar \bw^{\oplus 2}\) and let \(x_1,\dots,x_n\) be a basis of \(b'\) and \(y_1,y_2\) be the generators of the two copies of \(\bar \bw\). Then, \(K = b' \oplus \langle z \rangle\) and \(K \cap K^\perp = \langle z \rangle\), with \(z = y_1+y_2\). Restricting to \(K' = K/(K \cap K^\perp)\) gives rise to a homomorphism \(O(b) \rightarrow O(b|_{K'})\cong O(b')\), which is clearly surjective. 
Let \(f\) be an element of the kernel. We have \(f(x_i) = x_i + \alpha_i z\), \(f(z)=z\) and \(f(y_1)=x + \beta z + \gamma y_1\) for scalars \(\alpha_1,\dots, \alpha_n, \beta, \gamma \in \FF_2\) and \(x \in \langle x_1,\dots, x_n \rangle\).
As before, it holds \(\gamma=1\). Moreover, \(x\) is determined by the linear equations \(b(x+y_1,f(x_i))=0\) for \(i\in \{1,\dots,n\}\).
We calculate that \(b(f(y_1,y_1))=b(x,x)+b(z,z)+b(y_1,y_1)= 0 + 0 + 1\) is independent of \(\beta\). 
Therefore, any choice of \(\alpha_1,\dots, \alpha_n\) and \(\beta\) yields an element of the kernel. Its order is \(2^{n+1}\) as claimed. 
\end{proof}

\begin{table}
    \caption{Orthogonal groups of \(2\)-elementary quadratic forms, where \(q'\) is even and \(\varepsilon \in \{1,3\}\).}
    \label{table:orders_quadratic}
    \renewcommand*{\arraystretch}{1.5}
    \begin{tabular}{lll}
        \toprule
        \(q\) & \(\#O(q)\) & \(\# q^{-1}(\{0\})\) \\
        \midrule
        \(\bu^{\oplus m}\) & \(2^{m(m-1)+1}(2^m - 1)\prod_{k=1}^{m-1}\left(2^{2k}-1\right)\) & \(2^{m-1}(2^m+1)\)\\
        \(\bu^{\oplus m-1} \oplus \bv_{1}\) & \(2^{m(m-1)+1}(2^m + 1)\prod_{k=1}^{m-1}\left(2^{2k}-1\right)\) &\(2^{m-1}(2^m -1)\)\\
        \(q' \oplus \bw^{\varepsilon}\) & \(\# O(q')\)& \(\# q'^{-1}(0)\)\\
        \(q' \oplus \bw^{\varepsilon}\oplus \bw^{\varepsilon}\) & \(2 \# O(b_{q'})\) & \( 2^{\dim q'} \)\\
        \(q' \oplus \bw^{1} \oplus \bw^{3}\) & \(2^{\dim q'}\# O(q') \) & \(2\# q'^{-1}(0)\)\\
        \bottomrule
    \end{tabular}
\end{table}

\begin{table}
    \caption{Orthogonal groups of \(2\)-elementary symmetric bilinear forms.}
    \label{table:orders_bilinear}
    \renewcommand*{\arraystretch}{1.5}
    \begin{tabular}{llll}
        \toprule 
        \(b\) & \(\#O(b)\)  \\
        \midrule
        \(\bar \bu^{\oplus m}\) &  \(2^{m^2}\prod_{k=1}^{m}\left(2^{2k}-1\right)\) & \\
        \(\bar \bu^{\oplus m} \oplus \bar \bw\) & \(\#   O(\bar \bu^{\oplus m})\) \\
        \(\bar \bu^{\oplus m} \oplus \bar \bw^{\oplus 2}\) &  \(2^{2m+1} \#  O(\bar \bu^{\oplus m})\) \\
        \bottomrule 
    \end{tabular}
\end{table}

\begin{table}
\caption{Orthogonal groups of \(p\)-elementary quadratic forms, \(p \neq 2\). }
\label{table:orders_quadratic_odd}
    \renewcommand*{\arraystretch}{1.5}
    \begin{tabular}{lll}
        \toprule 
        \(q\) &\(\dim q\)& \(\#O(q)\)  \\
        \midrule
hyperbolic & \(2m\)&\( 2p^{m(m-1)}(p^m - 1)\prod_{k=1}^{m-1}\left(p^{2k}-1\right)\)\\
 else & \(2m\)&\( 2p^{m(m-1)}(p^m + 1)\prod_{k=1}^{m-1}\left(p^{2k}-1\right)\)\\
         & \(2m+1\) &\(2p^{m^2}\prod_{k=1}^{m}\left(p^{2k}-1\right)\)\\
        \bottomrule 
    \end{tabular}
\end{table}%

\begin{remark} \label{rmk:[O(b):O(q)]}
By inspection of \autoref{table:orders_quadratic} and \autoref{table:orders_bilinear}, we see that if \(q\) is a \(2\)-elementary quadratic form and \(b\) its associated bilinear form, then \[[O(b):O(q)]=\# q^{-1}(\{0\}).\]
\end{remark}

\begin{definition} \label{def:defect}
Let \((V,q)\) be a \(2\)-elementary quadratic form with associated bilinear form~\(b\).
The \emph{defect} of \(f \in O(b)\), is defined as the unique element \(v_f \in V\) satisfying the following condition for all \(x \in V\):
\[
    b(v_f,x) = q(fx) - q(x).
\]
Indeed, it holds \(2q(fx) = b(fx,fx) = b(x,x) = 2q(x)\), so the map \(x \mapsto q(fx)-q(x)\) takes values in \(\IZ/2\IZ\) and is linear, because \(q(f(x+y)) - q(x+y) = q(fx)-q(x) + q(fy)-q(y)\).
Hence, \(v_f\) is the solution of a linear system over \(\FF_2\).
\end{definition}

\begin{corollary} \label{cor:bijection}
If \(q\) is a \(2\)-elementary quadratic form with associated bilinear form~\(b\), then the maps (see \Cref{def:transvection,def:defect})
\[
    \Phi \colon q^{-1}(\{0\}) \longrightarrow O(q) \backslash O(b),  \qquad v \longmapsto O(q)T_v
\]
and
\[
    \Psi \colon O(q) \backslash O(b) \longrightarrow q^{-1}(\{0\}), \qquad O(q)f \longmapsto v_f
\]
are mutually inverse. In particular, it holds \(q(v_f) = q(fv_f) = 0\) for any \(f \in O(b)\). 
\end{corollary}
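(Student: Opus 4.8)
The plan is to show that $\Phi$ and $\Psi$ are well defined and mutually inverse by direct computation, exploiting the transvection formula and the defect linear system. First I would check that $\Phi$ is well defined: given $v$ with $q(v)=0$ (hence $b(v,v)=2q(v)=0$, so $T_v$ makes sense), I claim $T_v \in O(b)$ — this is already recorded in \Cref{def:transvection} — and that replacing $v$ by another representative does not change the coset $O(q)T_v$. Actually the cleaner route is to compute the defect of $T_v$ directly: for all $x$, $q(T_v x) - q(x) = q(x + b(x,v)v) - q(x) = b(x,v)^2 q(v) + b(x,v)\,b(x,v) $, and since $b(x,v) \in \FF_2$ satisfies $b(x,v)^2 = b(x,v)$ and $q(v) = 0$, this collapses to $b(x,v)\cdot b(v,v)\cdot(\text{correction})$; carefully expanding $q(x+cv) = q(x) + c^2 q(v) + c\, b(x,v)$ with $c = b(x,v)$ gives $q(T_v x) - q(x) = b(x,v)$, so $v_{T_v} = v$. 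This simultaneously shows $\Psi(\Phi(v)) = \Psi(O(q)T_v) = v_{T_v} = v$, giving $\Psi \circ \Phi = \id$, provided $\Psi$ is well defined.

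Next I would verify $\Psi$ is well defined, i.e. that the defect is constant on cosets $O(q)f$: if $g \in O(q)$ then for all $x$, $q(gfx) - q(x) = q(gfx) - q(fx) + q(fx) - q(x) = 0 + b(v_f, x)$ since $g$ preserves $q$, so $v_{gf} = v_f$. I should also note $v_f \in V$ genuinely exists and is unique: the map $x \mapsto q(fx) - q(x)$ is $\FF_2$-linear (as spelled out in \Cref{def:defect}) and $b$ is non-degenerate, so it is represented by a unique $v_f$.

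Then comes $\Phi \circ \Psi = \id$, which is the main obstacle: given $f \in O(b)$ with defect $v_f$, I must show $O(q)f = O(q)T_{v_f}$, equivalently $f T_{v_f}^{-1} = f T_{v_f} \in O(q)$ (note $T_v$ is an involution since $b(v,v)=0$). Using the previous paragraph's computation $v_{T_{v_f}} = v_f$ together with additivity of defects — $v_{gh}$ satisfies $b(v_{gh},x) = q(ghx)-q(x) = (q(ghx) - q(hx)) + (q(hx)-q(x)) = b(v_g, hx) + b(v_h, x)$, i.e. $v_{gh} = h^{-1}v_g + v_h$ once one rewrites $b(v_g,hx) = b(h^{-1}v_g, x)$ using $h \in O(b)$ — I get $v_{f T_{v_f}} = T_{v_f}^{-1} v_f + v_f = T_{v_f}(v_f) + v_f$. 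Since $b(v_f,v_f) = 0$ we have $T_{v_f}(v_f) = v_f + b(v_f,v_f)v_f = v_f$, so $v_{fT_{v_f}} = v_f + v_f = 0$, which means $q((fT_{v_f})x) = q(x)$ for all $x$, i.e. $fT_{v_f} \in O(q)$. Hence $O(q)f = O(q)f T_{v_f} T_{v_f} = O(q) T_{v_f} = \Phi(v_f) = \Phi(\Psi(O(q)f))$. The subtle points to get right are the characteristic-2 identity $b(x,v)^2 = b(x,v)$, the sign-free nature of everything over $\FF_2$, and the bookkeeping in the cocycle relation $v_{gh} = h^{-1}v_g + v_h$; the existence of a representative $v$ with $q(v) = 0$ in the image of $\Psi$ is then automatic from $\Phi\circ\Psi = \id$ since $\Phi$ only accepts such $v$. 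The final sentence $q(v_f) = q(f v_f) = 0$ follows because $v_f \in q^{-1}(\{0\})$ by surjectivity of $\Psi$ onto its stated codomain (which is exactly what $\Phi \circ \Psi = \id$ delivers), and $q(f v_f) = q(v_f) + b(v_f, v_f) \cdot(\cdots)$; more directly, $q(fv_f) - q(v_f) = b(v_f, v_f) = 0$. As a sanity check at the end, I would note that the two maps being mutual inverses reproves $\#\,q^{-1}(\{0\}) = [O(b):O(q)]$, consistent with \Cref{rmk:[O(b):O(q)]}.
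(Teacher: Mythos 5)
Your overall architecture differs from the paper's: you try to prove both \(\Psi\circ\Phi=\id\) and \(\Phi\circ\Psi=\id\) by direct computation, whereas the paper proves only the first identity and then deduces bijectivity from the cardinality count \(\#q^{-1}(\{0\})=[O(b):O(q)]\) of \Cref{rmk:[O(b):O(q)]}. The direct route has a genuine gap at the step \(\Phi\circ\Psi=\id\). Your own first-paragraph expansion gives \(q(T_vx)-q(x)=b(x,v)q(v)+b(x,v)\), so \(v_{T_v}=v\) holds \emph{only} when \(q(v)=0\): if \(q(v)=1\) one gets \(v_{T_v}=0\), and if \(q(v)\in\{1/2,3/2\}\) then \(b(v,v)=2q(v)=1\), so \(T_v\) is not even an element of \(O(b)\) and the coset \(O(q)T_v\) is meaningless. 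When you write \(v_{fT_{v_f}}=T_{v_f}^{-1}v_f+v_{T_{v_f}}=v_f+v_f=0\), you are therefore using \(v_{T_{v_f}}=v_f\), i.e.\ \(q(v_f)=0\) --- but that is exactly the ``in particular'' assertion which you defer to the end and claim follows \emph{from} \(\Phi\circ\Psi=\id\). The argument is circular: without an independent proof that \(q(v_f)=0\) for every \(f\in O(b)\), neither \(T_{v_f}\in O(b)\) nor \(v_{fT_{v_f}}=0\) is available, and \(\Phi(\Psi(O(q)f))\) is not even defined.

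To repair the argument along your lines you would need a separate proof that the defect is always \(q\)-isotropic. Your cocycle identity does yield \(q(v_{gh})=q(v_g)+q(v_h)\) (the cross terms cancel using \(q(h^{-1}v_g)=q(v_g)+b(v_h,h^{-1}v_g)\)), so \(f\mapsto q(v_f)\) is a homomorphism and it would suffice to verify the claim on a generating set of \(O(b)\) --- but exhibiting such generators is essentially the content of the proof of \Cref{prop:order_orthogonal_elementary} and is not free. The paper sidesteps all of this: it only ever forms \(T_v\) for \(v\in q^{-1}(\{0\})\), proves that the coset-to-defect map is a left inverse of \(\Phi\) (so \(\Phi\) is injective), and then invokes \(\#q^{-1}(\{0\})=[O(b):O(q)]\) to conclude that \(\Phi\) is bijective; \(q(v_f)=0\) then comes out as a consequence rather than an input. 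Your remaining steps (well-definedness of \(\Phi\) and of the defect on cosets, and the computation \(\Psi\circ\Phi=\id\)) are correct and agree with the paper.
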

\begin{proof}
Any \(v \in q^{-1}(\{0\})\) satisfies \(b(v,v) = 0\), so \(\Phi\) is well defined. 
Given \(g \in O(q)\), we have \(b(v_{gf},x) = q(gfx)-q(x) = q(fx)-q(x) = b(v_{f},x)\),
hence \(v_{g f} = v_f\). 
In particular, the function
\(\tilde{\Psi}\colon O(q) \backslash O(b) \rightarrow V\) mapping \(O(q)f\) to \(v_f\)
is well defined. Note that \(q(v_f)=0\) is yet to be proven.

Let \(v \in q^{-1}(\{0\})\). Then \(\tilde{\Psi} \circ \Phi (v) = \tilde{\Psi}(T_v)\) and for \(x\in V\) we have
\begin{align*}
    b(\tilde{\Psi}(T_v),x) & = q(T_v(x))-q(x) = q(x + b(x,v)v)-q(x) \\
        & q(x) + b(x,v)^2 + b(x,v)^2q(v)-q(x) = b(x,v)^2 = b(x,v).
\end{align*}
Hence, \(v=\tilde{\Psi}(T_v)\) as desired, i.e., \(\tilde{\Psi}\) is left inverse to \(\Phi\). Since both sets are finite and have the same cardinality by \Cref{rmk:[O(b):O(q)]}, \(\Phi\) is bijective.
Therefore, the image of \(\tilde{\Psi}\) is indeed \(q^{-1}(\{0\})\), and \(\Psi\) is the inverse of \(\Phi\).
This also proves \(q(v_f) = 0\), which in turn implies \(q(fv_f) = q(v_f) + b(v_f,v_f)= 3q(v_f) = 0\).
\end{proof}

\section{The order formula} \label{sec:order}
Let \(R\) be a principal ideal domain with field of fractions \(Q\). 
A \emph{lattice} over~\(R\) is a \(Q\)-valued quadratic form~\((L,q)\) over~\(R\) on a finitely generated, free \(R\)-module.
The \emph{dual lattice} of \(L\) is denoted by \(L^\vee = \{ x \in L\otimes Q \mid b(x,L)\subseteq R\}\cong \Hom_R(L,R)\).
Following Kneser \cite[(26.1)]{kneser:quadratische_formen}, a lattice \(L\) is called \emph{integral} if \(b(L,L) \subseteq R\); 
\emph{even} if \(b(x,x) \in 2R\) for all \(x \in L\) (equivalently, if \(q(L) \subseteq R\)); 
\emph{odd} if \(L\) is integral, but not even; 
\emph{unimodular} if \(L\) is integral and \(\det (L) \in R^\times/R^{\times 2}\); 
\emph{modular} or, more precisely, \emph{\(p^i\)-modular}  if \((L,p^{-i}q)\) is unimodular for \(p \in R\), \(i \in \IZ\).

Here, we take \(R = \IZ_p\), the ring of \(p\)-adic integers. 
An orthogonal decomposition of a \(\IZ_p\)-lattice \((L,q) = \bigoplus_i (L_i, p^i q_i)\) with unimodular lattices \((L_i, q_i)\) is called a \emph{Jordan decomposition}. 
The summands \((L_i, p^i q_i)\) are called \emph{Jordan constituents}.  
Every \(\IZ_p\)-lattice admits a Jordan decomposition, not necessarily unique (see, e.g., \cite[Ch.~15, \S 7]{conway_sloane:sphere_packings}). 

The aim of this section, which we achieve in \Cref{thm:order}, is to determine the order of the image of the natural group homomorphism \(O(L) \rightarrow \GL(L/p^nL)\), where \(\GL(L/p^nL)\) denotes the group of linear automorphisms of \(L/p^nL\) as a \(\IZ/p^n\IZ\)-module.

\begin{definition} \label{def:oddity_vector}
Given a unimodular \(\IZ_2\)-lattice \((E,q)\), an element \(v_E \in E\) with the property that
\[
    b(v_E, x) \equiv b(x,x) \mod 2
\]
for all \(x \in E\) is called an \emph{oddity vector} of \(E\).
If \(v_E'\) is another oddity vector, then \(v_E'=v_E+2y\) for some \(y \in E\) and \(q(v_E)\equiv q(v_E') \bmod 4\).
Conway and Sloane \cite{conway_sloane:sphere_packings} call the quantity \(b(v_E,v_E) \bmod 8\) the \emph{oddity} of \(E\), whence the name of \(v_E\). 
We note that the oddity vector is also known as the characteristic vector of \(E\).
By definition, the oddity vector and the oddity of \((E, 2^i q)\) are those of \((E, q)\).
\end{definition}

\begin{lemma} \label{lem:lift_embedding_mod4}
Let \(E\) and \(G\) be integral \(\IZ_2\)-lattices of ranks \(e\) and \(g\), respectively. Suppose that \(E\) is unimodular, and that the bilinear form \(b_G\otimes \FF_2\) has rank \(e\), i.e., any Jordan constituent \(G_0\) is also of rank \(e\). If \(f \colon E \rightarrow G\) is a linear map, then there is an isometric embedding \(f' \colon E \rightarrow G\) with
\[
    f' \equiv f \mod 2
\]
if and only if the following conditions hold:
\begin{align}
    q(fx)    & \equiv q(x)    \mod 2 \quad \text{ for all \(x \in E\)}, \label{eq:mod2_1} \\
    q(fv_E)  & \equiv q(v_E)  \mod 4.                 \label{eq:mod2_2}
\end{align}
If this is the case, then for each \(n \geq 2\), the number of possibilities for \(f'\) modulo \(2^n\) is given by \(2^k\) with
\[
    k = (n-1)eg - (n-1)\frac{e(e+1)}{2} + t,
\]
where we set \(t = 1\) if \(E\) is odd, and \(t = 0\) if \(E\) is even.
\end{lemma}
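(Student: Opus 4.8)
The plan is to prove the "only if" direction first, which is elementary: if $f'$ is an isometric embedding with $f' \equiv f \bmod 2$, then $q(f'x) = q(x)$ for all $x$, and since $f'x \equiv fx \bmod 2$ implies $q(f'x) \equiv q(fx) \bmod 2$ (bilinearity: $q(fx) - q(f'x) = q(fx - f'x) + b(f'x, fx - f'x)$, and $fx - f'x \in 2E$ makes the first term land in $4\ZZ$ and the second in $2\ZZ$), condition \eqref{eq:mod2_1} follows. For \eqref{eq:mod2_2}, apply this with $x = v_E$ but track congruences modulo $4$: writing $fv_E = f'v_E + 2y$, one has $q(fv_E) = q(f'v_E) + 2b(f'v_E, y) + 4q(y)$, and $b(f'v_E, y) = b(v_E, (f')^{-1}y) \equiv b((f')^{-1}y, (f')^{-1}y) = b(y, y) \equiv b(f'v_E, f'v_E) \equiv 0 \bmod 2$ using the defining property of the oddity vector and the fact that $f'$ is an isometry onto its image (which is unimodular, hence has its own oddity vector behaving compatibly). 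Hence $q(fv_E) \equiv q(f'v_E) = q(v_E) \bmod 4$.

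For the "if" direction, the strategy is Hensel lifting: construct $f'$ as a $2$-adic limit of approximations $f_n \colon E \to G$ with $f_n \equiv f \bmod 2$ and $q(f_n x) \equiv q(x) \bmod 2^n$ for all $x$ (and the analogous congruence on the associated bilinear form). The base case $n = 1$ is hypothesis \eqref{eq:mod2_1}, upgraded using \eqref{eq:mod2_2} to get the first genuine step, $n = 2$. The inductive step is the crux: given $f_n$, we seek $f_{n+1} = f_n + 2^{n-1} h$ for some $h \colon E \to G$ (note the exponent $n-1$, not $n$, because correcting the quadratic form mod $2^{n+1}$ requires perturbing $f$ at level $2^{n-1}$ when $E$ is even, and the odd case costs one extra degree of freedom — this is exactly where the $t$ in the count comes from). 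Expanding, $q(f_{n+1}x) - q(f_n x) = 2^{n-1}b(f_n x, hx) + 2^{2n-2}q(hx)$; since we need this $\equiv -(q(f_n x) - q(x)) \bmod 2^{n+1}$ and the right side is divisible by $2^n$, we must solve $b(f_n x, hx) \equiv c(x) \bmod 4$ for a prescribed $\ZZ/4$-valued (in the even case, $\ZZ/2$-valued after the first step) function $c$, plus handle the cross terms $b(f_{n+1}x, f_{n+1}y)$ similarly. Because $E$ is unimodular and $f_n$ is injective mod $2$ with image a rank-$e$ primitive-enough sublattice of $G$ (this uses the rank hypothesis on $b_G \otimes \FF_2$), the pairing $x \mapsto (y \mapsto b(f_n x, y))$ lets us encode $h$ by its "Gram-type" data, and the obstruction to solving lives in a cokernel that vanishes precisely under \eqref{eq:mod2_1}–\eqref{eq:mod2_2}. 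The main obstacle is verifying that this obstruction vanishes at every step and not just the first — i.e., that conditions \eqref{eq:mod2_1} and \eqref{eq:mod2_2}, together with the inductively maintained congruence, suffice to keep solving; here the oddity vector reappears, since the mod-$4$ solvability of $b(f_n x, hx) \equiv c(x)$ against the symmetry constraint (the $h$'s must be chosen so that $f_{n+1}$ stays a candidate, and the Gram matrix perturbation is symmetric) has a single $\ZZ/2$ obstruction controlled by evaluating against $v_E$, which \eqref{eq:mod2_2} kills at the start and which then propagates.

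For the counting statement, once existence is established, the number of $f'$ modulo $2^n$ equals the number of choices made at steps $2, 3, \dots, n$. At step $m$ (producing $f_m$ from $f_{m-1}$, for $3 \le m \le n$, with step $2$ slightly different) the set of valid corrections $h$ modulo $2$ is a torsor under the solution space of the homogeneous system: $h$ such that $b(f_{m-1}x, hy) + b(hx, f_{m-1}y) \equiv 0$ for all $x, y$, i.e. $h$ composed with "dualize via $f_{m-1}$" is an alternating (or, in the odd case, a slightly larger affine) space of $e \times e$ matrices over $\FF_2$ — dimension $\binom{e}{2} = \frac{e(e-1)}{2}$ — while $h$ itself, as a map $E \to G$, ranges over an affine space whose free directions are $eg - \frac{e(e+1)}{2}$ dimensional (the $eg$ entries of $h \bmod 2$, cut by the $\frac{e(e+1)}{2}$ symmetric conditions fixing the diagonal and the required value of the symmetric part). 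Summing $eg - \frac{e(e+1)}{2}$ over the $n-1$ steps $m = 2, \dots, n$ gives $(n-1)\bigl(eg - \frac{e(e+1)}{2}\bigr)$, and the extra additive $t$ records the one further binary choice available at the first step ($m = 2$) when $E$ is odd — the choice of how to correct along $v_E$, which is unconstrained mod $4$ by \eqref{eq:mod2_2} only up to a single bit in the odd case. Assembling these pieces yields $k = (n-1)eg - (n-1)\frac{e(e+1)}{2} + t$ as claimed. I expect the bookkeeping of which congruence class lives modulo $2$ versus modulo $4$ at each step — and making the "$n-1$ versus $n$" exponent shift airtight — to be the most delicate part of writing this out in full.
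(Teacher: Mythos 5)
Your overall strategy differs from the paper's: you attempt a full Hensel induction from scratch, whereas the paper reduces everything to the mod-$4$ level and then invokes Kneser's approximation theorems (Satz (15.5) for the existence of an exact isometric embedding once the quadratic form is preserved mod $4$, and Zusatz (15.4) for the count of $2^{eg-e(e+1)/2}$ extensions at each level $n\geq 3$). The difficulty is that the heart of your induction --- the claim that the linear system for the correction $h$ at step $m$ is solvable for every $m\geq 3$, i.e.\ that the only obstruction sits at the mod-$4$ level and ``propagates'' no further, and that the homogeneous solution space has dimension exactly $eg-e(e+1)/2$ at each such step --- is asserted rather than proved; you flag it yourself as the most delicate part. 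This is precisely the content of the Kneser results the paper cites, so as written the proposal has a gap exactly where the real work lies. Relatedly, the extra summand $t$ is attributed to ``one further binary choice along $v_E$'' without derivation; the paper obtains it by an explicit computation (polarizing the system $x\tilde{H}x^\intercal + x_0\tilde{H}x^\intercal=0$ for $\tilde{H}=FGH^\intercal$ after moving the oddity vector to a coordinate vector), and without that computation the value of $t$ --- in particular the fact that it appears only once rather than at every step --- is not established.

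There is also a concrete error in your necessity argument for the mod-$4$ condition. The expression $(f')^{-1}y$ is undefined for a general $y\in G$, since the image $f'(E)$ need not be all of $G$, and the chain ending in $b(f'v_E,f'v_E)\equiv 0\bmod 2$ is false when $E$ is odd, because then $b(f'v_E,f'v_E)=b(v_E,v_E)$ is odd. What you actually need is that $f'v_E$ is an oddity vector for the whole of $G$, i.e.\ $b(f'v_E,y)\equiv b(y,y)\bmod 2$ for all $y\in G$; this is where the hypothesis that $b_G\otimes\FF_2$ has rank $e$ enters (it makes $E/2E\to G/(G\cap 2G^\vee)$ an isomorphism of bilinear $\FF_2$-modules), and it must then be combined with the cancellation $2b(f'v_E,y)+4q(y)\equiv 8q(y)\equiv 0\bmod 4$ --- note that $4q(y)$ alone need not lie in $4\ZZ_2$ when $G$ is integral but odd, so the two terms have to be treated together. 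Your computation drops this term.
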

\begin{proof}
\Cref{eq:mod2_1} depends only on \(f\) modulo \(2\), so its necessity is clear. 
Moreover, it implies that \(f\) induces an isometry of bilinear \(\FF_2\)-modules 
\(E/2E \to G/(G\cap 2G^\vee)\) as both \(E = E_0\) and~\(G_0\) have rank \(e\). This implies that \(b(fv_E,y)\equiv b(y,y) \bmod 2\) for all \(y \in G\).
We have therefore \[
    q(fv_E + 2y) - q(fv_E) = 2b(fv_E,y) +4q(y) \equiv 8q(y)\equiv 0 \mod 4,
\]
where we used that \(G\) is integral, that is, \(2q(G) \subseteq \IZ_2\), in the last step.
This shows that \cref{eq:mod2_2} also depends only on \(f\) modulo \(2\).

For the sufficiency, let \(f\) satisfy \cref{eq:mod2_1,eq:mod2_2}. We want to find a linear map \(h \colon E \rightarrow G\) such that \(f' = f + 2h\) is an isometric embedding. 
Recall that \(\IZ_2\) is complete and \(4q(G) \subseteq 2\IZ_2\). 
We denote by \(b_f(y)\) the map \(x \mapsto b(fx,y)\).

By \cite[Satz (15.5)]{kneser:quadratische_formen}, we only need \(f'\) to satisfy the following conditions:
\[
    E^\vee = b_{f'}(G) + 2E^\vee, \qquad q(f'x) \equiv q(x) \mod 4.
\]
The former condition is satisfied for any choice of \(h\). Indeed, \(E = E^\vee\) by assumption, and \(b_{f'}(G) \subseteq E^\vee\) because \(G\) is integral. Moreover, it holds for all \(e,x \in E\)
\[
    b(x,e) \equiv b(fx,fe) \equiv b(f'x,f'e) \equiv b_{f'}(f'e)(x) \mod 2,
\] 
which shows \(E \subseteq b_{f'}(G) + 2E^\vee\). We hence turn to the latter condition.

By \cref{eq:mod2_2}, we can find (see \cite[(2.3)]{kneser:quadratische_formen}) a not necessarily symmetric, bilinear form \(a \colon E \times E \rightarrow \IZ_2\) such that for all \(x \in E\)
\[
    q(fx) - q(x) = 2a(x,x), \qquad a(v_E,x) \equiv 0 \mod 2.
\]
Since \(fv_E\) is an oddity vector for \(G\), we have
\[
    b(fv_E,hx) \equiv 2q(hx) \mod 2,
\]
which in turn gives
\begin{align*} 
    q(f'x) & = q(fx) + 2b(fx,hx) + 4q(hx) \\
           & \equiv q(x) + 2\big(a(x+v_E,x) + b(fx+fv_E,hx)\big) \mod 4.
\end{align*}
Therefore, \(f'\) preserves the quadratic forms modulo \(4\) if and only if \(h\) satisfies
\begin{equation}\label{eqn:h}
b(f(x+v_E),hx) \equiv -a(x+v_E,x) \mod 2
\end{equation}
for all \(x \in E\).
Since \(E^\vee = b_f(G) + 2E^\vee\), we can find for every \(x \in E\) an element \(hx \in G\) satisfying \(b_f(hx) \equiv a(\cdot,x) \bmod 2E^\vee\). We do so on a basis of \(E\) and we define \(h\) by extending linearly. Unraveling the definitions, we obtain \(b(f(x+v_E))=b_f(hx)(x+v_E) \equiv a(x+v_E,x) \bmod 2\), i.e., \cref{eqn:h} holds.

Finally, we count the possibilities for \(f' = f + 2h\) modulo \(2^n\) starting with \(n = 2\). We need to compute the dimension of the homogeneous solution space in \(h\) for the equation
\begin{equation} \label{eq:lifting_h_mod2}
    b(fx + fv_E, hx) \equiv 0 \mod 2 \quad \text{for all \(x \in E\)}.
\end{equation} 
We choose a basis in \(G\) composed of \(e\) elements in \(f(E)\) and \(g-e\) elements in \(f(E)^\perp\). 
Let \(F,H \in \FF_2^{e\times g}\) be the matrices with coefficients in \(\IZ_2/2\IZ_2 \cong \FF_2\) representing \(f\) and \(h\) modulo \(2\), respectively. We must determine the \(eg\) coefficients of \(H\). Note that the \(e(g-e)\) coefficients in the last \(g-e\) columns can be chosen freely.

Denoting the Gram matrix of \(G\) by the same symbol \(G\), and by \(x_0 \in \FF_2^e\) the vector corresponding to \(v_E\), \cref{eq:lifting_h_mod2} translates into the following matrix equation for \(\tilde{H} = FGH^\intercal \):
\[
    x\tilde{H}x^\intercal   + x_0\tilde{H}x^\intercal  = 0 \quad \text{for all \(x \in \FF_2^e\)}.
\]
Up to coordinate change, we can assume \(x_0 = te_1\), with \(t = 0\) if \(E\) is even or \(t = 1\) if \(E\) is odd, and \(e_1\) is the first standard basis vector. By polarization, we obtain the following equivalent system of equations:
\[
    \tilde{H}_{ij} + \tilde{H}_{ji} = 0, \quad \tilde{H}_{i,i} + t \tilde{H}_{1i} = 0 \quad \text{for all \(i,j \in \{1,\ldots,e\}\)}.
\]
Hence, we can choose \(e(e-1)/2 + t\) coefficients freely. Indeed, if \(t = 1\), then the equation \(\tilde{H}_{i,i} + t \tilde{H}_{1i} = 0\) is trivial for \(i = 1\). In total, we get \(2^k\) possibilities for \(h\) modulo~\(2\), that is, for \(f'\) modulo~\(4\), with
\[
    k = e(g-e) + \frac {e(e-1)}2  + t = eg - \frac{e(e+1)}{2} + t
\]

Now, every \(f'\) modulo \(2^{n-1}\) can be extended modulo~\(2^n\) in exactly \(2^{eg - e(e+1)/2}\) ways for every \(n \geq 3\) by \cite[Zusatz (15.4)]{kneser:quadratische_formen}. This finishes the proof.
\end{proof}

\begin{definition} \label{def:qi}
Let \((L,q) = \bigoplus_i (L_i, p^i q_i)\) be a Jordan decomposition of a \(\ZZ_p\)-lattice. Using the isomorphism \(\QQ_p/p\ZZ_p \cong \QQ/p\ZZ\), we define on \(L_i/pL_i\) the \(p\)-elementary quadratic form \[\bar{q}_i(x + pL_i) = q_i(x) \bmod p\] and its associated bilinear form \[\bar{b}_i(x + pL_i,y + pL_i) = b_i(x,y) \bmod p.\]
\end{definition}

\begin{lemma} \label{lem:liftable_embedding_mod2}
Let \((L,q) = \bigoplus_{i \geq 0} (L_i, p^i q_i)\) be a Jordan decomposition of an integral \(\IZ_2\)-lattice \(L\) of rank \(r\), with \(L_i\) of rank \(r_i\). Let \(t_i = 0\) if \((L_i, q_i)\) is even, and \(t_i = 1\) else. Set \(t = (t_0,t_1,t_2)\).
Then, the number of embeddings \(\bar f \in \Hom(L_0,L) \otimes \FF_2\) modulo~\(2\) which lift to isometric embeddings \(f \in \Hom(L_0,L)\) is given by
\[
    2^{r_0(r-r_0) - c_0} \cdot
    \begin{cases}
        \#O(\bar{q}_0) & \text{for \(t_1 = 0\)}, \\
        \#O(\bar{b}_0) & \text{for \(t_1 = 1\)},
    \end{cases}
\]
with
\[
    c_0 =
    \begin{cases}
        0                                            & \text{for \(t = (0,0,0)\)}, \\
        0                                           & \text{for \(t = (0,0,1)\)}, \\
        r_0                                         & \text{for \(t = (0,1,0)\)}, \\
        r_0                                          & \text{for \(t = (0,1,1)\)}, \\
        r_1 - \log_2(\#\bar{q}_1^{-1}(0))           & \text{for \(t = (1,0,0)\)}, \\
        1                                             & \text{for \(t = (1,0,1)\)}, \\
        r_0 + r_1 - 1 - \log_2(\#\bar{q}_1^{-1}(0)) & \text{for \(t = (1,1,0)\)}, \\
        r_0 + 1                                     & \text{for \(t = (1,1,1)\)}.
    \end{cases}
\]
\end{lemma}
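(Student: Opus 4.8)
The plan is to invoke \Cref{lem:lift_embedding_mod4} with $E = L_0$ and $G = L$. Since $L_0$ is unimodular and, in $L = \bigoplus_{i\ge 0}(L_i, 2^i q_i)$, every constituent with $i\ge 1$ vanishes modulo~$2$, the form $b_L\otimes\FF_2$ has rank $r_0$, so the hypotheses of \Cref{lem:lift_embedding_mod4} hold with $e=r_0$, $g=r$. Hence a class $\bar f\in\Hom(L_0,L)\otimes\FF_2$ is the reduction of an isometric embedding $L_0\hookrightarrow L$ if and only if it satisfies \eqref{eq:mod2_1} and \eqref{eq:mod2_2}, and these are conditions on $\bar f$ alone; the task is to count these classes.

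I would decompose $\bar f = (\bar f_0,\bar f_1,\bar f_2,\bar f_{\ge 3})$ along $L/2L = \bigoplus_i L_i/2L_i$, with $\bar f_i\colon L_0/2L_0\to L_i/2L_i$. Since $2^iq_i$ contributes nothing modulo~$4$ once $i\ge 3$, the component $\bar f_{\ge 3}$ is completely free, accounting for a factor $2^{r_0(r-r_0-r_1-r_2)}$, so only $(\bar f_0,\bar f_1,\bar f_2)$ needs analysis. Polarizing \eqref{eq:mod2_1} gives $b(\bar fx,\bar fy)\equiv b(x,y)\bmod 2$, whence $\bar f_0\in O(\bar b_0)$, and in particular $\bar f_0$ is invertible. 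Writing $v_{L_i}$ for the oddity vector of $L_i$ (\Cref{def:oddity_vector}), and also for its class in $L_i/2L_i$, and using that $2q_1(\tilde w)\equiv\bar b_1(w,v_{L_1})\bmod 2$ for every $w\in L_1/2L_1$ and any lift $\tilde w\in L_1$, the residual content of \eqref{eq:mod2_1} is the $\FF_2$-linear identity of functionals on $L_0/2L_0$
\[
  \bar b_0\bigl(v_{\bar f_0},\,\cdot\,\bigr) \;=\; \bar b_1\bigl(\bar f_1(\,\cdot\,),\, v_{L_1}\bigr),
\]
where $v_{\bar f_0}$ is the defect of $\bar f_0$ with respect to $\bar q_0$ (\Cref{def:defect}).

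For \eqref{eq:mod2_2} the key point is that the $L_0$-contribution cancels automatically. Reducing the defining property of $v_{L_0}$ modulo~$2$ shows that $v_{L_0}$ is the unique oddity vector of $\bar b_0$; since $\bar f_0\in O(\bar b_0)$ this forces $\bar f_0(v_{L_0})=v_{L_0}$ in $L_0/2L_0$, so writing $f_0 v_{L_0}=v_{L_0}+2w$ and using the oddity property of $v_{L_0}$ inside $L_0$ gives $q_0(f_0 v_{L_0})\equiv q_0(v_{L_0})\bmod 4$. Hence \eqref{eq:mod2_2} is vacuous when $t_0=0$ (then $v_{L_0}=0$), and for $t_0=1$ it reduces to the single relation
\[
  2q_1(f_1 v_{L_0})+4q_2(f_2 v_{L_0})\;\equiv\;0\pmod 4,
\]
which constrains only $\bar f_1(v_{L_0})\in L_1/2L_1$ and $\bar f_2(v_{L_0})\in L_2/2L_2$; one checks that $2q_1(f_1 v_{L_0})\bmod 4$ depends only on $\bar f_1(v_{L_0})$, that $2q_1(\tilde w)\in 2\IZ_2$ precisely when $\bar q_1(w)=0$, and that $4q_2(f_2 v_{L_0})\equiv 2\,\bar b_2\bigl(\bar f_2(v_{L_0}),v_{L_2}\bigr)\bmod 4$.

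It remains to count the admissible triples $(\bar f_0,\bar f_1,\bar f_2)$ in each of the eight cases for $t=(t_0,t_1,t_2)$; this is elementary linear algebra over~$\FF_2$, with $\#O(\bar q_0)$, $\#O(\bar b_0)$ and $\#\bar q_1^{-1}(0)$ taken from \Cref{prop:order_orthogonal_elementary}. When $t_1=0$ one has $v_{L_1}=0$, so the displayed identity forces $v_{\bar f_0}=0$, i.e.\ $\bar f_0\in O(\bar q_0)$, and $\bar f_1$ is otherwise free; when $t_1=1$ the $\FF_2$-linear map $\bar f_1\mapsto\bar b_1\bigl(\bar f_1(\,\cdot\,),v_{L_1}\bigr)$ is onto $(L_0/2L_0)^\vee$ with kernel of dimension $r_0(r_1-1)$, so each $\bar f_0\in O(\bar b_0)$ admits exactly $2^{r_0(r_1-1)}$ compatible $\bar f_1$. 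Imposing \eqref{eq:mod2_2} for $t_0=1$: if $t_2=0$ it becomes $\bar f_1(v_{L_0})\in\bar q_1^{-1}(0)$, and if $t_2=1$ it couples $\bar f_1$ and $\bar f_2$ through the nonzero $\FF_2$-linear functional $\bar f_2\mapsto\bar b_2\bigl(\bar f_2(v_{L_0}),v_{L_2}\bigr)$, costing a factor~$2$; in the cases $t=(1,1,0)$ and $t=(1,1,1)$ one also uses $\bar b_0(v_{\bar f_0},v_{L_0})=\bar q_0(\bar f_0 v_{L_0})-\bar q_0(v_{L_0})=0$, which keeps the constraint on $\bar f_1$ at the vector $v_{L_0}$ consistent with those already imposed. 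Multiplying by the free factor $2^{r_0(r-r_0-r_1-r_2)}$ and simplifying, each case yields $2^{r_0(r-r_0)-c_0}$ times $\#O(\bar q_0)$ (for $t_1=0$) or $\#O(\bar b_0)$ (for $t_1=1$), with $c_0$ as tabulated. I expect the main obstacle to be purely organizational: tracking, case by case, which of the eight situations the single mod-$4$ relation actually constrains, and checking throughout that the auxiliary quantities are well defined on the relevant $\FF_2$-reductions.
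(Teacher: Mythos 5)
Your proposal is correct and follows essentially the same route as the paper's proof: reduce to \Cref{lem:lift_embedding_mod4} with \(E=L_0\), \(G=L\), observe that only the components \(\bar f_0,\bar f_1,\bar f_2\) are constrained, deduce \(\bar f_0\in O(\bar b_0)\), and perform the same eight-case count. The only (equivalent) difference is cosmetic: you phrase the residual conditions via the defect \(v_{\bar f_0}\) and the oddity vectors \(v_{L_1},v_{L_2}\), where the paper writes them directly as the congruences \(q(x)\equiv q(f_0x)+q(f_1x)\bmod 2\) and \(q(f_1v)+q(f_2v)\equiv 0\bmod 4\).
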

\begin{proof}
Let \(\bar f = \bigoplus \bar f_i \in \Hom(L_0,L) \otimes \FF_2\) be an isometric embedding modulo \(2\) that lifts to an isometric embedding \(f = \bigoplus f_i \in \Hom(L_0,L)\), where \(\bar f_i \in \Hom(L_0,L_i) \otimes \FF_2\) denotes the image of \(f_i \in \Hom(L_0,L_i)\). Note that the term \(2^{r_0(r-r_0)}\) is the number of possibilities for \(\bigoplus_{i \geq 1} \bar f_i\) without any constraints.

Since \(b(L_i,L_i) \subseteq 2^i\IZ_2\) for every \(i\), we have for all \(x,y \in L_0\) that 
\[
    b(x, y) = b(fx, fy) \equiv b(f_0x, f_0y) \mod 2,
\]
which is equivalent to \(\bar f_0 \in O(\bar{b}_0)\). Moreover, it always holds
\begin{equation} \label{eq:t1=1}
    q(x) = q(fx) \equiv q(f_0x) + q(f_1x) \mod 2,
\end{equation}
because \(q(L_i) \subseteq 2^{i-1}\IZ_2\) for every \(j\).
Now, if \(t_1 = 0\), then \(q(L_1) \subseteq 2\IZ_2\), so \cref{eq:t1=1} simplifies to
\begin{equation} \label{eq:t1=0}
    q(x) \equiv q(f_0x) \mod 2,
\end{equation}
which is equivalent to \(\bar f_0 \in O(\bar{q}_0)\). 

We are now ready to count the constraints given by \Cref{lem:lift_embedding_mod4}. 

Suppose that \(t_0 = 0\), that is, \(v_{L_0} = 0\) (\Cref{def:oddity_vector}). If \(t_1 = 0\), then \cref{eq:mod2_2} is automatically satisfied, while \cref{eq:mod2_1} is equivalent to \cref{eq:t1=0}. Hence, any choice of \(\bar f = \bigoplus \bar f_j\) lifts as long as \(\bar f_0 \in O(\bar{q}_0)\), but without any constraints on \(\bar f_i\) for \(i \geq 1\), that is, \(c_0 = 0\).
If \(t_1 = 1\), \cref{eq:t1=1} represents \(r_0\) independent linear equations on \(\bar f_1\) for every choice of \(\bar f_0 \in O(\bar{b}_0)\). Moreover, we have no constraints on \(\bar f_i\) for \(i \geq 2\), so we set \(c_0 = r_0\) in this case.

Now, suppose that \(t_0 = 1\) and let \(v = v_{L_0} \in L_0\). Given that \(\bar f_0 \in O(\bar{b}_0)\), we have for all \(x \in L_0\)
\[
    b(f_0v,x) \equiv b(v,f_0^{-1}x) \equiv b(f_0^{-1}x,f_0^{-1}x) \equiv b(x,x),
\]
which implies that \(f_0(v) = v + 2x_0\) for some \(x_0 \in L_0\). Thus, it holds
\begin{equation} \label{eq:q(f0v)=q(v)_mod4}
    q(f_0v) \equiv q(v) + 4q(x_0) + 2b(x_0,x_0) \equiv q(v) + 8q(x_0) \equiv q(v) \mod 4.
\end{equation}
In particular, \cref{eq:mod2_2} is equivalent to
\begin{equation} \label{eq:t0=1_t2=1}
    q(f_1v) + q(f_2v) \equiv 0 \mod 4.
\end{equation}
If in addition \(t_2 = 0\), then \(q(L_2) \subseteq 4\IZ_2\), so \cref{eq:t0=1_t2=1} simplifies to
\begin{equation} \label{eq:t0=1_t2=0}
    q(f_1v) \equiv 0 \mod 4.
\end{equation}

Recall that \(\bar f_0 \in O(\bar{q}_0)\) whenever \(t_1 = 0\). For \(t = (1,0,0)\), \cref{eq:t0=1_t2=0} represents the \(r_1\) independent linear equations \(\bar f_1 \bar v=\bar w\) for \(\bar{f}_1\) for every choice of \(\bar f_0\) and \(\bar w \in \bar{q}_1^{-1}(\{0\})\). We have no other conditions on \(\bigoplus_{j \geq 2} \bar f_j\), so we put \(c_0 = r_1 - \log_2(\#\bar{q}_1^{-1}(\{0\}))\).

For \(t = (1,0,1)\), \cref{eq:t0=1_t2=1} represents only one linear condition on \(\bar f_2\). Thus, we merely put \(c_0 = 1\) in this case.

For \(t = (1,1,0)\), we obtain a contribution of \(r_1-\log_2(\#\bar{q}_1^{-1}(\{0\}))\) to \(c_0\) from \cref{eq:t0=1_t2=0} as before. 
On the other hand, \cref{eq:q(f0v)=q(v)_mod4,eq:t0=1_t2=0} imply that \cref{eq:t1=1} is automatically satisfied for \(v\). 
Thus, we obtain a contribution of \(r_0 - 1\) to \(c_0\) from \cref{eq:t1=1}.

For \(t = (1,1,1)\), we get a contribution of~\(r_0\) to \(c_0\) from \cref{eq:t1=1} and a contribution of~\(1\) from \cref{eq:t0=1_t2=1}.
\end{proof}

Given a \(\IZ_p\)-lattice \(L\), we denote the image of \(O(L) \rightarrow \GL(L/p^nL)\) by \(O(L/p^nL)\). In the next theorem, we provide a formula for the order of this group.

\begin{theorem} \label{thm:order}
Let \((L,q) = \bigoplus_{i \in \IZ} (L_i, p^i q_i)\) be a Jordan decomposition of a (not necessarily integral) \(\IZ_p\)-lattice \(L\) of rank \(r\), with \(L_i\) of rank \(r_i\). 
Given \(n \geq 1\), set
\[
    v = (n-1)\frac{r(r-1)}{2} + \sum_{i < j} r_i r_j.
\]
If \(p \neq 2\), then
\[
    \#O(L/p^nL) = p^v \prod_{i \in \IZ} \# O(\bar{q}_i).
\]
If \(p = 2\), let \(t_i = 0\) if \((L_i, q_i)\) is even, or \(t_i = 1\) else. Then,
\begin{equation} \label{eq:c}
    \#O(L/2^nL) = 2^v \prod_{i \in \IZ} 
    \begin{Cases}
    \#O(\bar{q}_i) & \text{for } t_{i-1}=t_{i+1}=0\\ 
    2^{-r_i}\#O(\bar{b}_i) & \text{else}
    \end{Cases} 2^{t_i\delta_{n\geq 2} - s_i}, 
\end{equation}
where
\[
    s_i = \begin{cases}
        1   & \text{for \((t_{i-1}, t_{i}, t_{i+1}) = (1,0,1)\)}, \\
        -1  & \text{for \((t_{i-1}, t_{i}, t_{i+1}) = (0,1,1)\)}, \\
        1   & \text{for \((t_{i-1}, t_{i}, t_{i+1}) = (1,1,1)\)}, \\
        0   & \mbox{else.}
    \end{cases}
\]
\end{theorem}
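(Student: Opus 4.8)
The plan is to prove the formula by induction on the number of nonzero Jordan constituents, peeling off the one of lowest index.

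\textit{Reductions.} If $i_0$ is the lowest index with $L_{i_0}\neq 0$, replacing $q$ by $p^{-i_0}q$ changes neither the module $L/p^nL$, nor the group $O(L)$ and its image in $\GL(L/p^nL)$, nor any of the quantities $v$, $s_i$, $\#O(\bar q_i)$, $\#O(\bar b_i)$ occurring in the statement (all of which depend only on the ranks $r_i$, the flags $t_i$, and the relative order of the indices). Hence we may assume $i_0=0$; then $L$ is integral and $L_0$ is unimodular. Put $M=L_0^{\perp}=\bigoplus_{i\ge 1}(L_i,p^iq_i)$, so that $L=L_0\oplus M$, $\rk M=r-r_0$, and, for $p=2$, $M$ has evenness flags $t_1,t_2,\dots$ (with the convention that a missing constituent counts as even).

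\textit{Orbit--stabilizer.} Restriction to $L_0$ gives a map $\varrho\colon O(L/p^nL)\to\Hom(L_0,L/p^nL)$, $\bar f\mapsto\bar f|_{L_0}$, equivariant for the natural left action of $O(L/p^nL)$ on the target; write $E_n$ for its image. As $L_0$ is unimodular, every isometric embedding $j\colon L_0\hookrightarrow L$ is primitive and $L=j(L_0)\oplus j(L_0)^{\perp}$; cancelling the unimodular summand $L_0\cong j(L_0)$ shows $j(L_0)^{\perp}\cong M$, so $j$ extends to an isometry of $L$. Therefore $O(L)$, hence $O(L/p^nL)$, acts transitively on $E_n$, which is precisely the set of reductions modulo $p^n$ of isometric embeddings $L_0\hookrightarrow L$. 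Writing $\tilde N_n$ for the stabilizer of the reduction of the inclusion $L_0\hookrightarrow L$, orbit--stabilizer yields $\#O(L/p^nL)=\#E_n\cdot\#\tilde N_n$. The factor $\#E_n$ is supplied by the two lemmas of this section: for $p=2$, \Cref{lem:liftable_embedding_mod2} applied to $L_0\hookrightarrow L$ counts the isometric embeddings modulo $2$ that lift, and \Cref{lem:lift_embedding_mod4} applied with $E=L_0$ and $G=L$ (its rank hypothesis holds since $b_L\otimes\FF_2$ has rank $r_0=\rk L_0$) counts, for $n\ge 2$, the lifts modulo $2^n$ of each. Collecting exponents, $\#E_n=2^{\,v_L-v_M}\cdot 2^{\,t_0\delta_{n\ge 2}-c_0}\cdot\#O_0$, where $v_L,v_M$ denote the exponent $v$ of the statement for $L$ and for $M$, $c_0$ is as in \Cref{lem:liftable_embedding_mod2}, and $\#O_0$ is $\#O(\bar q_0)$ if $t_1=0$ and $\#O(\bar b_0)$ if $t_1=1$; for $p\neq 2$ one instead gets $\#E_n=p^{\,v_L-v_M}\#O(\bar q_0)$, every mod-$p$ isometric embedding being liftable.

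\textit{The factor $\#\tilde N_n$ and conclusion.} One first observes that any $f\in O(L)$ with $f|_{L_0}\equiv\id\bmod p^n$ is, modulo $p^n$, block diagonal for $L=L_0\oplus M$: the $L_0\to M$ block vanishes mod $p^n$ by hypothesis, and the $M\to L_0$ block is divisible by $p^n$ because $L_0$ is unimodular and $b(L_0,M)=0$ while $b(M,M)\subseteq p\mathbb{Z}_p$ (compare $b(fv,fm)=b(v,m)=0$ with $fv\equiv v$, $fm=\beta m+\mu m$, forcing $b(v,\beta m)\in p^n\mathbb{Z}_p$, i.e.\ $\beta m\in p^nL_0^\vee=p^nL_0$). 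Hence $\tilde N_n$ embeds into $\GL(M/p^nM)$ via the $M\to M$ block, and the content is to identify its image: it contains $O(M/p^nM)$ (extend isometries of $M$ by $\id$ on $L_0$), and one must show it equals $O(M/p^nM)$ for $p\neq 2$ and, for $p=2$, that it is an explicit overgroup of $O(M/2^nM)$ of index depending only on $(t_0,t_1,t_2)$ and $r_0,r_1$. Granting this, substituting the inductive hypothesis $\#O(M/p^nM)=p^{v_M}\prod_{i\ge 1}(\cdots)$ into $\#O(L/p^nL)=\#E_n\cdot\#\tilde N_n$ and simplifying — using \Cref{prop:order_orthogonal_elementary} and \Cref{rmk:[O(b):O(q)]} to reconcile $\#O(\bar q_0)$ with $\#O(\bar b_0)$ and to account for the extra $2^{-r_0}$ in the $i=0$ term of the statement — gives the claimed formula; the base case $M=0$ is the two lemmas directly, or \Cref{prop:order_orthogonal_elementary} for $n=1$. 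I expect the identification of $\#\tilde N_n$ for $p=2$ to be the main obstacle: one must decide, case by case on the evenness pattern of three consecutive constituents, exactly which ``glue'' modulo $2^n$ between $L_0$ and $M$ is realized by a genuine isometry of $L$, and this is where the exponents $s_i$ (nonzero only for the triples $(1,0,1),(0,1,1),(1,1,1)$) and the jump $\delta_{n\ge 2}$ originate — the latter because condition~\cref{eq:mod2_2}, and with it the $t_i$-dependence of \Cref{lem:lift_embedding_mod4}, only enters for $n\ge 2$. For $p\neq 2$ none of these phenomena occur and the computation is straightforward, recovering Watson's formula.
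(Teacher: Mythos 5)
Your overall architecture --- rescale so that \(i_0=0\), peel off the unimodular constituent \(L_0\), count the orbit of restrictions to \(L_0\) via \Cref{lem:liftable_embedding_mod2,lem:lift_embedding_mod4}, count the stabilizer, and induct --- is exactly the paper's, and your orbit count \(\#E_n=p^{v_L-v_M}2^{t_0\delta_{n\geq 2}-c_0}\#O_0\) agrees with the paper's \(a=a_1a_2\). The genuine gap is the stabilizer. You leave \(\#\tilde N_n\) unidentified (``Granting this\dots'') and, more importantly, you conjecture that for \(p=2\) it is a proper overgroup of \(O(M/2^nM)\) whose index, depending on the evenness pattern, is the source of the exponents \(s_i\). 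That is not where the content lies: the stabilizer has order exactly \(\#O(M/p^nM)\) for every \(p\), including \(p=2\), with no case distinctions. The point is that an extension of the inclusion \(L_0\hookrightarrow L\) to an isometry of \(L\) is the same as an isometry \(M\to L_0^{\perp}\), that Kneser's cancellation theorem (\cite{kneser:witt_for_local_rings}, or \cite[Satz (4.3)]{kneser:quadratische_formen}, which is what the paper invokes) identifies \(L_0^\perp\cong M\), and that two extensions agree modulo \(p^n\) precisely when the corresponding isometries of \(M\) do; hence each fiber of the restriction map has exactly \(\#O(M/p^nM)\) elements. Your observation that the \(M\to L_0\) block of a stabilizer element vanishes modulo \(p^n\) is a step in this direction, but you never close the argument that the \(M\to M\) block is the reduction of a genuine isometry of \(M\) and that each such reduction occurs exactly once.

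As a consequence, the exponents \(s_i\) and the switch between \(\#O(\bar{q}_1)\) and \(2^{-r_1}\#O(\bar{b}_1)\) in the \(i=1\) factor of \cref{eq:c} do not originate in the stabilizer at all; they arise from reconciling the orbit count --- specifically the constant \(c_0\) of \Cref{lem:liftable_embedding_mod2}, which for \(t_0=1\) involves \(\#\bar{q}_1^{-1}(\{0\})\) --- with the inductive formula for \(\#O(M/p^nM)\), using the identity \(\#\bar{q}_1^{-1}(\{0\})\,\#O(\bar{q}_1)=\#O(\bar{b}_1)\) of \Cref{rmk:[O(b):O(q)]}. This is an eight-case verification over \((t_0,t_1,t_2)\) that you assert but do not carry out. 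It is pure bookkeeping once the stabilizer is pinned down, but it is precisely the step in which the displayed values of \(s_i\) are actually checked, so it cannot be waved away; and since your proposal expects this information to emerge from a stabilizer computation that in fact yields nothing case-dependent, the argument as written would not close.
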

\begin{proof}
We give a proof for \(p = 2\). The proof for \(p \neq 2\) is analogous, but much simpler, because there are basically no case distinctions, so we leave it to the reader.

Since the Jordan decomposition is finite, there exists \(i_0\) and \(k\) with \((L,q) = \bigoplus_{i = i_0}^{i_0 + k} (L_i, p^i q_i)\). The proof is by induction on \(k\). After rescaling the form~\(q\), we may assume without loss of generality that \(i_0 = 0\). 

Set \(L' = \bigoplus_{i=1}^{k} L_i\) with invariants \(r_i',t_i',s_i'\).
Let \(f \colon L_0\hookrightarrow L\) be an embedding and \(L''\) be the orthogonal complement of the image. 
Then, \(L' \cong L''\) by \cite{kneser:witt_for_local_rings} (or \cite[Satz (4.3)]{kneser:quadratische_formen}), and we can extend \(f\) to an element of \(O(L)\). 
Counted modulo \(2^n\), there are \(b = \# O(L'/2^nL')\) such extensions. 
Let \(a\) be the number of isometric embeddings \(f \colon L_0 \rightarrow L\) modulo \(2^n\) which lift to an isometric embedding \(f \in \Hom(L_0,L)\). 
We have obtained \(\#O(L/2^nL) = ab\).

Note that \(a = a_1a_2\), where \(a_1\) is the number of isometric embeddings \(f\) modulo~\(2\), which is given by \Cref{lem:liftable_embedding_mod2}, and \(a_2\) is the number of possible extensions \(f'\) modulo \(2^n\) with \(f' \equiv f\) modulo \(2\). By \Cref{lem:lift_embedding_mod4}, \(a_2\) does not depend on \(f\). 

If \(k = 0\), then \(L = L_0\), \(r = r_0\), \(L' = 0\), and \(b = 1\). Furthermore, \(s_0 = 0\). We abbreviate \(\delta  = \delta_{n \geq 2}\). For \(n = 1\), \Cref{lem:liftable_embedding_mod2} gives that \(\#O(L/2L) = a_1 = \# O(\bar{q}_0)\). For \(n \geq 2\), we have     \[
        \#O(L/2^nL) = a_1 a_2 = \# O(\bar{q}_0) \cdot 2^k,
    \]
with \(k = v + t_0\delta\) given by \Cref{lem:lift_embedding_mod4}. 

Let now \(k>0\). By the induction hypothesis for \(L'\),
\begin{align*}
    b & = 2^{v'} \prod_{i=1}^{k} \begin{Cases} \#O(\bar{q}_i') & \text{\(t_{i-1}'=t_{i+1}'=0\)} \\ \#O(\bar{b}_i')2^{-r_i} & \text{else} \end{Cases} 2^{t'_i\delta-s'_i} \\
    & = 2^{v'}\begin{Cases} \#O(\bar{q}_1) & t_2 = 0 \\ \#O(\bar{b}_1) & t_2 = 1 \end{Cases} 2^{t_1\delta - s_1'} \prod_{i = 2}^k \cdots 
\end{align*}
where \(v'=(n-1)(r-r_0)(r-r_0-1)/2+\sum_{0<i<j}r_ir_j\).
By \Cref{lem:liftable_embedding_mod2,lem:lift_embedding_mod4}, we have
\[
    a = 2^{w-c_0+t_0\delta} \begin{Cases} \#O(\bar{q}_0) & t_1 = 0 \\ \#O(\bar{b}_0) & t_1 = 1 \end{Cases},
\] 
where \(w = (n-1)(r_0r - r_0(r_0+1)/2) + r_0(r-r_0)\).

We have to prove \(ab = c\), where \(c\) is given by \cref{eq:c}:
\[
    c = 2^v \begin{Cases} \#O(\bar{q}_0) & t_1 = 0 \\ \#O(\bar{b}_0)2^{-r_0} & t_1 = 1 \end{Cases} 2^{t_0\delta - s_0} \begin{Cases} \#O(\bar{q}_1) & t_0 = t_2 = 0 \\ \#O(\bar{b}_1)2^{-r_1} & \text{else} \end{Cases} 2^{t_1\delta - s_1} \prod_{i = 2}^k \cdots
\]

Given that \(t_i' = t_i\) for \(i \geq 1\), it also holds \(s_i = s_i'\) for \(i \geq 2\). Therefore, the factors appearing in the products in \(b\) and \(c\) relative to \(i \geq 2\) are equal. One can easily check that \(v'+w = v\).
We confirm that \(ab = c\) for each of the \(8\) possible \(t = (t_0,t_1,t_2)\) separately. It is a matter of bookkeeping.

For \(t=(0,0,0)\),
\begin{align*}
    ab  & = 2^{w-0+0}\#O(\bar{q}_0) \cdot 
            2^{v'} \#O(\bar{q}_1)2^{0-0} \cdots, \\
    c   & = 2^{v}\#O(\bar{q}_0)2^{0-0} \#O(\bar{q}_1)2^{0-0} \cdots.
\end{align*}

For \(t=(0,0,1)\),
\begin{align*}
    ab  & = 2^{w-0+0}\#O(\bar{q}_0) \cdot 
            2^{v'}\#O(\bar{b}_1)2^{-r_1}2^{0-0} \cdots, \\
    c   & = 2^{v}\#O(\bar{q}_0)2^{0-0} \#O(\bar{b}_1)2^{-r_1}2^{0-0} \cdots.
\end{align*}

For \(t=(0,1,0)\),
\begin{align*}
    ab  & = 2^{w-r_0+0}\#O(\bar{b}_0) \cdot
            2^{v'}\#O(\bar{q}_1)2^{\delta-0} \cdots, \\
    c   & = 2^{v}\#O(\bar{b}_0)2^{-r_0+0-0} \#O(\bar{q}_1)2^{\delta-0} \cdots.
\end{align*}

For \(t=(0,1,1)\),
\begin{align*}
    ab  & = 2^{w-r_0+0}\#O(\bar{b}_0) \cdot
            2^{v'}\#O(\bar{b}_1)2^{-r_1}2^{\delta-(-1)} \cdots, \\
    c   & = 2^{v}\#O(\bar{b}_0)2^{-r_0+0-0} \#O(\bar{b}_1)2^{-r_1}2^{\delta-(-1)} \cdots.
\end{align*}

For \(t = (1,0,0)\),
\begin{align*}
    ab  & = 2^{w-r_1+\delta}\#\bar{q}_1^{-1}(\{0\}) \# O(\bar{q}_0) \cdot
        2^{v'}\#O(\bar{q}_1)2^{0-0} \cdots, \\
    c   & = 2^v \#O(\bar{q}_0)2^{\delta-0} \#O(\bar{b}_1)2^{-r_1}2^{0-0} \cdots.
\end{align*}

For \(t = (1,0,1)\),
\begin{align*}
    ab  & = 2^{w-1+\delta}\# O(\bar{q}_0) \cdot
            2^{v'}\#O(\bar{b}_1)2^{-r_1}2^{0-0} \cdots, \\
    c   & = 2^v \#O(\bar{q}_0)2^{\delta-0} \#O(\bar{b}_1)2^{-r_1}2^{0-1} \cdots.
\end{align*}

For \(t = (1,1,0)\),
\begin{align*}
    ab  & = 2^{w-(r_0+r_1-1)+\delta}\#\bar{q}_1^{-1}(\{0\})\#O(\bar{b}_0) \cdot
            2^{v'}\#O(\bar{q}_1)2^{\delta-0} \cdots, \\
    c   & = 2^{v}\#O(\bar{b}_0)2^{-r_0}2^{\delta-(-1)} \#O(\bar{b}_1)2^{-r_1}2^{\delta+0} \cdots.
\end{align*}

For \(t = (1,1,1)\),
\begin{align*}
    ab  & = 2^{w-(r_0+1)+\delta}\#O(\bar{b}_0) \cdot
        2^{v'}\#O(\bar{b}_1)2^{-r_1}2^{\delta-(-1)} \cdots, \\
    c   & = 2^{v}\#O(\bar{b}_0)2^{-r_0}2^{\delta-(-1)} \#O(\bar{b}_1)2^{-r_1}2^{\delta-1} \cdots.
\end{align*}

Indeed, we always have \(ab = c\). For \(t = (1,0,0)\) and \(t = (1,1,0)\), we use the fact that \(\#\bar{q}_1^{-1}(\{0\}) \#O(\bar{q}_1)=\#O(\bar{b}_1)\), which we observed in \Cref{rmk:[O(b):O(q)]}.
\end{proof}

\begin{remark}
Note that the \(\bar{q}_1\) appearing in the cases \(t = (1,0,0)\) and \(t = (1,1,0)\) of \Cref{lem:liftable_embedding_mod2} may depend on the chosen Jordan decomposition of \(L\). On the other hand, all formulas appearing in \Cref{thm:order} are independent of the chosen Jordan decomposition.
\end{remark}

\begin{remark} \label{rmk:p-adic_density}
The numbers \(N(L,p^n)=\# \{X \in \IZ_p^{n \times n}| XGX^\intercal \equiv G \mod p^n \}\) were computed by Watson \cite{watson1976}.
We have
\[N(L, p^n) = \#O(L/p^nL) \prod_{i<j}p^{i r_i r_j} \prod_{i}p^{i r_i(r_i+1)/2}\]
if \(n\) is large enough. This gives the connection with the \(p\)-mass \(m_p\) as defined by Conway and Sloane \cite[p. 281]{conway-sloane:mass_formula}:
\[m_p(L) = \frac{p^{nr(r-1)/2+s(r+1)/2}}{N(L,p^n)},\]
where \(s\) is the highest power of \(p\) dividing the determinant of \(L\).
For \(p \neq 2\), we checked that our mass formulas and Conway--Sloane's are equivalent.
For \(p = 2\), though, we only confirmed that both
formulas give the same masses for a few thousand genera of small rank and determinant.
\end{remark}

\section{Hensel lifting algorithms} \label{sec:hensel}
Let \(f\colon L \to M\) be a linear map of \(\ZZ_p\)-lattices of the same rank. 
We say that \(f\) is \emph{compatible} if \(f(L \cap p^iL^\vee)=M \cap p^iM^\vee\) for all \(i \in \ZZ\).
For an integer \(a \geq 0\), we call a compatible linear map \(f\) \emph{\(a\)-approximate} if there exists an isometry \(\tilde f \colon L \to M\) such that \(\tilde f \equiv f \bmod p^a\). 

Starting from a compatible, \(a\)-approximate map \(f\) with \(a \geq 1\), and an arbitrary integer \(b\), the aim of this section is to construct a compatible, \(b\)-approximate map \(f'\) such that \(f' \equiv f \bmod p^a\). We do so by translating the notions of compatibility and \(a\)-approximation into the language of matrices. 

For algorithmic purposes, we choose a basis of \(L\) respecting a chosen Jordan decomposition \(L=\bigoplus_i L_i\) and obtain a block diagonal Gram matrix \(G\) of \(L\). Choosing a basis of \(M\) yields the corresponding gram matrix \(Z\) of \(M\) and a matrix \(F\) representing \(f\) with respect to the two chosen bases. Conventionally, we work with row vectors.

\begin{definition} \label{def:G-compatible}
Let \(G \in \IZ_p^{r \times r}\) be a symmetric, block-diagonal matrix with \(p^i\)-modular blocks \(G_{(i)}\) of size \(r_i \geq 0\).
An invertible matrix \(F \in \IZ_p^{r \times r}\) with blocks~\(F_{(i,j)}\) of size \(r_i \times r_j\) is said to be \emph{\(G\)-compatible} if the following holds for all \(i,j\):
\begin{equation} \label{eq:compatible}
    F_{(i,j)} \equiv 0 \mod{p^{\max(i-j,0)}}.
\end{equation}
\end{definition}

Recall that for a unimodular \(\ZZ_2\)-lattice \(L\), the quantity \(b(v_L,v_L) \bmod 8\) is the oddity of \(L\), where \(v_L\) is its oddity vector (\Cref{def:oddity_vector}).
The oddity of a modular lattice \((L, 2^i q)\) is defined as that of \((L, q)\). The oddity of a symmetric, modular matrix~\(G\) is defined as the oddity of any lattice with Gram matrix~\(G\). The isometry class of a unimodular \(\ZZ_2\)-lattice is determined by its rank, its determinant, its parity and its oddity \cite[Ch. 15, \S 7.3]{conway_sloane:sphere_packings}. 

\begin{definition} \label{def:approximate}
Given an integer \(a \geq 0\), an \emph{\(a\)-approximate} triple \((F,G,Z)\) is a triple of matrices in \(\IZ_p^{r \times r}\) such that \(F\) is \(G\)-compatible, \(Z\) is symmetric, and the following holds for all \(i,j\):
\begin{equation} \label{eq:approximate_all_p}
    (FGF^\intercal - Z)_{(i,j)} \equiv 0 \mod p^{a + \max(i,j)}.
\end{equation}
If \(p = 2\), we require in addition that, for all \(i\) and all \(k \in \{1,\ldots, r_i\}\),
\begin{equation} \label{eq:approximate_p=2_all_a}
    ((FGF^\intercal - Z)_{(i,i)})_{k,k} \equiv 0 \mod 2^{a + i + 1}.
\end{equation}
If \(p = 2\) and \(a = 1\), we also require that, for all \(i\),
\begin{equation} \label{eq:approximate_p=2_a=1}
    v_i (FGF^\intercal - Z)_{(i,i)} v_i^\intercal \equiv 0 \mod 2^{i+3},
\end{equation}
where \(v_i\) is the oddity vector of the matrix \(Z_{(i,i)}\). 
For the sake of brevity, we say that a matrix \(F \in \IZ_p^{r \times r}\) is \(a\)-approximate if the triple \((F,G,G)\) is \(a\)-approximate. 
\end{definition}

\begin{remark} \label{rmk:oddity_G}
For a modular square matrix \(G \in \ZZ_2^{r \times r}\), the oddity vector of \(G\) is given by \(yG^{-1}\) where \(y_k = G_{k,k}\) for \(k \in \{1,\dots,r\}\).
\end{remark}

\begin{remark}\label{rmk:oddity_H}
Let \(p=2\) and set \(H_i=(FGF^\intercal)_{(i,i)}\).
If \((F,G,Z)\) satisfies \cref{eq:approximate_all_p} for \(a \geq 1\), then \(H_i\equiv Z_{(i,i)} \bmod 2^{i+1}\). Therefore, \(H_i\) and \(Z_i\) have the same oddity vector \(v_i\). 
Thus, we can rephrase \cref{eq:approximate_p=2_a=1} by saying that \(H_i\) and \(Z_{(i,i)}\) must have the same oddity as well.
\end{remark}

In what follows, we present three algorithms that enable us to pass from an \(a\)-approximate triple \((F,G,Z)\) with \(a \geq 1\) to a \(b\)-approximate triple \((F',G,Z)\) for an arbitrary \(b\), with appropriate conditions on \(F' - F\). 
We deal with the unimodular case first.

\begin{algorithm}
    \caption{Hensel\_qf\_unimodular\_odd} \label{alg:hensel_modular_odd}
    \begin{algorithmic}[1]
        \REQUIRE \(F,G,Z \in \IZ_p^{r \times r}\), \(a,b \in \IZ\) such that \(p \neq 2\), \(a \geq 1\), \(G\) is unimodular and \((F,G,Z)\) is \(a\)-approximate.
    \ENSURE \(F'\) with \(F' \equiv F \bmod p^a\) such that \((F',G,Z)\) is \(b\)-approximate.
    \WHILE{\(a < b\)}
            \STATE \(A \defeq FGF^\intercal - Z\)
            \STATE \(X \defeq 2^{-1} A \left(GF^\intercal \right)^{-1}\)
        \STATE \(F \defeq F - X\)
        \STATE \(a \defeq 2a\) \label{eq:a=2a}
    \ENDWHILE
    \RETURN \(F\)
    \end{algorithmic}
\end{algorithm}
\begin{proof}[Proof of \Cref{alg:hensel_modular_odd}]
If \(b \leq a\), there is nothing to do. 

Choose \(A = FGF^\intercal - Z\) and \(X = 2^{-1} A \left(GF^\intercal \right)^{-1}\) as in the algorithm. 
Since \((F,G,Z)\) is \(a\)-approximate and \(G\) is unimodular, we have \(A = A^\intercal\), \(A \equiv 0 \bmod{p^{a}}\), and \(X \equiv 0 \bmod{p^a}\). 

We now put \(F' = F - X\). By hypothesis, \(G\) consists of a single block, so \(F'\) is \(G\)-compatible. Moreover, we have
\begin{align*}
    F'GF'^\intercal - Z 
        & = FGF^\intercal - Z - XGF^\intercal - (XGF^\intercal)^\intercal + XGX^\intercal \\
        & = A - 2^{-1} A - 2^{-1} A^\intercal + XGX^\intercal \\
        & = XGX^\intercal \equiv 0 \mod{p^{2a}},
\end{align*}
i.e. \((F',G,Z)\) is a \(2a\)-approximate triple, which justifies Step~\ref{eq:a=2a}.
\end{proof}

\begin{remark} \label{rmk:system_odd}
In \Cref{alg:hensel_modular_odd}, the matrix \(X\) is a particular solution of the linear system \(XGF^\intercal + (XGF^\intercal)^\intercal = A\). Clearly, the kernel of this system consists of matrices of the form \(X = H(GF^\intercal)^{-1}\) with \(H\) anti-symmetric.
\end{remark}

We now turn to \(p = 2\).
Here, the solution of the underlying linear system is less trivial. The following lemma addresses this issue.

\begin{lemma} \label{lem:has_solution}
For any vectors \(b, z \in \FF_2^r\) and any symmetric matrix \(M\in \FF_2^{r \times r}\), there is a solution \(X \in \FF_2^{r \times r}\) to the following linear system 
\begin{align} \label{eq:linear_system_1}
    X + X^\intercal & = M \\
    \label{eq:linear_system_2}
    \textstyle X_{k,k} + \sum_{l = 1}^r X_{k,l} z_l &= b_k, \quad k \in \{1, \dots , r\}
\end{align}
if and only if the diagonal of \(M\) is zero and further
\begin{equation} \label{eq:linear_system_necessary}
    \sum_{k=1}^r z_k^2 b_{k} + \sum_{l < k} z_{k}z_{l}M_{k,l} = 0.
\end{equation}
In this case, the solution space has dimension \(r(r-1)/2+\delta_{z\neq 0}\).
\end{lemma}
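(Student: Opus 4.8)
The plan is to handle the necessity of the two conditions first, and then prove solvability and compute the dimension simultaneously by a direct parametrization over $\FF_2$.

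For necessity, the diagonal of \eqref{eq:linear_system_1} reads $2X_{k,k}=M_{k,k}$, which over $\FF_2$ forces $M_{k,k}=0$. For \eqref{eq:linear_system_necessary}, I would form the linear combination $\sum_k z_k\cdot(\text{equation }k)$ of \eqref{eq:linear_system_2}: writing $\sum_{k,l}z_kz_lX_{k,l}=\sum_k z_k^2X_{k,k}+\sum_{k<l}z_kz_l(X_{k,l}+X_{l,k})$ and substituting \eqref{eq:linear_system_1} in the last sum, the diagonal part collapses because $z_k+z_k^2=0$ in $\FF_2$, and what is left is precisely \eqref{eq:linear_system_necessary}. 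So it is a consequence of the system and hence necessary.

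For the converse and the count, I would assume $\diag M=0$ and parametrize: take the entries $X_{i,j}$ with $i<j$ as free and set $X_{j,i}=M_{i,j}+X_{i,j}$, which solves \eqref{eq:linear_system_1}; so far the free data are these $\binom r2$ parameters together with the $r$ diagonal entries. Rewriting equation $k$ of \eqref{eq:linear_system_2} as $(1+z_k)X_{k,k}=b_k+\sum_{l\neq k}z_lX_{k,l}$, one sees that for $z_k=0$ it determines $X_{k,k}$, while for $z_k=1$ it turns into a condition $\sum_{l\neq k}z_lX_{k,l}=b_k$ from which $X_{k,k}$ has disappeared, so that $X_{k,k}$ stays free. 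Writing $S=\{k:z_k=1\}$ and $s=|S|$, the surviving conditions involve only the entries $X_{k,l}$ with $k,l\in S$ and read $\sigma_k(u)=b_k+\sum_{l\in S,\,l<k}M_{l,k}$ for $k\in S$, where $u$ collects the free parameters $X_{i,j}$ with $i<j$ both in $S$ and $\sigma_k(u)$ is the sum of those coordinates of $u$ whose index pair contains $k$.

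The crux is the rank of the family $\{\sigma_k\}_{k\in S}$, and this is the step I expect to require the most care. The map $u\mapsto(\sigma_k(u))_{k\in S}$ is exactly the vertex-edge incidence map over $\FF_2$ of the complete graph on the vertex set $S$: the parameter $X_{i,j}$ goes to the vector with ones precisely in positions $i$ and $j$. Its image is contained in the even-weight subspace of $\FF_2^S$, because $\sum_{k\in S}\sigma_k=0$, and equals that subspace, because the vectors with exactly two ones span it; hence the rank is $s-1$ whenever $s\geq 1$. Consequently the conditions are consistent if and only if the target vector $\big(b_k+\sum_{l\in S,\,l<k}M_{l,k}\big)_{k\in S}$ has even coordinate sum, and expanding that sum recovers the left-hand side of \eqref{eq:linear_system_necessary}, so the hypotheses guarantee a solution. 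Finally, for the dimension: if $z=0$ then $S=\emptyset$, every diagonal entry is determined and there is no further constraint, leaving dimension $\binom r2$; if $z\neq 0$ then $s\geq 1$, the $s$ free diagonal parameters $X_{k,k}$ ($k\in S$) survive while the rank-$(s-1)$ conditions reduce the remaining parameters by $s-1$, leaving $\binom r2+s-(s-1)=\binom r2+1$. In both cases this equals $r(r-1)/2+\delta_{z\neq 0}$, as asserted, and apart from the incidence-rank point every step is routine $\FF_2$-bookkeeping.
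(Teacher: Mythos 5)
Your proof is correct, and it reaches the same conclusion by a somewhat different organization of the linear algebra. The paper also works by Gaussian elimination: it splits the index set into $E=\{k:z_k=1\}$ and $N=\{k:z_k=0\}$, takes the strictly lower-triangular entries and the $X_{k,k}$ with $k\in N$ as pivots, and then replaces the equations indexed by $E$ with their cumulative sums $\sum_{k\le m}(\cdot)$; in that triangularized system each $X_{k,k+1}$ appears in exactly one equation and serves as a pivot, the $m=e$ equation degenerates into the consistency condition \eqref{eq:linear_system_necessary}, and the pivot count gives the dimension. You instead eliminate the lower triangle and the diagonal first and then recognize the residual system on the parameters $X_{i,j}$ with $i<j$ in $S=\{k:z_k=1\}$ as the vertex--edge incidence map of the complete graph on $S$ over $\FF_2$, whose image is the even-weight subspace and whose rank is $|S|-1$; this yields both the consistency criterion (even coordinate sum of the target, which expands to \eqref{eq:linear_system_necessary}) and the dimension in one stroke. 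Your direct derivation of necessity by taking the $z$-weighted sum of the equations \eqref{eq:linear_system_2} and substituting \eqref{eq:linear_system_1} is also cleaner than extracting it as the $m=e$ case of the reduced system. The paper's version is more explicitly algorithmic (it exhibits the pivots one would use in an implementation), while yours isolates the one non-routine rank computation as a standard fact about incidence matrices; both are complete, and the boundary cases $|S|\in\{0,1\}$ check out in your argument as well.
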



\begin{proof}
We follow the Gaussian algorithm. 
Let \(E = \{k \mid z_k = 1\}\) and \(N = \{k \mid z_k = 0\}\).
Up to relabelling, we can suppose that \(E = \{1,\ldots,e\}\) for some \(e \geq 0\).

\Cref{eq:linear_system_1} means \(X_{k,l} + X_{l,k} = M_{k,l}\) for all \(k,l\). 
For \(k = l\), we get the necessary condition \(M_{k,k} = 2X_{k,k} = 0\). 
Moreover, we can choose as pivots all \(X_{k,l}\) with \(k > l\), that is, all variables below the diagonal.

For \(k \in N\), \cref{eq:linear_system_2} reads 
\[
    X_{k,k} + \sum_{l \leq e} X_{k,l} = b_k.
\]
Since it is the only equation in which \(X_{k,k}\) appears, we can solve for \(X_{k,k}\), i.e., we take each \(X_{k,k}\), \(k\in N\), as a pivot.

For \(k \in E\), \cref{eq:linear_system_2} reads
\[
    \sum_{l < k} X_{k,l} + \sum_{k < l \leq e} X_{k,l} = b_k.
\]
By summing up the first \(m\) equations for \(m \leq e\), we obtain the following equivalent set of equations:
\[
    \sum_{l < k \leq m} X_{k,l} + \sum_{k < l \leq m} X_{k,l} + \sum_{k \leq m < l \leq e} X_{k,l} = \sum_{k=1}^{m} b_k, \quad m \in \{1,\ldots,e\}.
\]
Note that the sums correspond to the following entries of the matrix \(X\), here exemplified with \(r = 10\), \(e = 7\), \(m = 5\):
\[
\begin{tikzpicture}[yscale=-0.2,xscale=0.2]
    \begin{scope}[xshift=-15cm]
    \foreach \x in {0,1,...,9}
    \foreach \y in {0,1,...,9}
      \draw (\x,\y) circle (0.4);
    \foreach \P in {(0,1),(0,2),(1,2),(0,3),(1,3),(2,3),(0,4),(1,4),(2,4),(3,4)}
        \fill \P circle (0.4);
    \end{scope}
    \begin{scope}
    \foreach \x in {0,1,...,9}
    \foreach \y in {0,1,...,9}
        \draw (\x,\y) circle (0.4);
    \foreach \P in {(1,0),(2,0),(2,1),(3,0),(3,1),(3,2),(4,0),(4,1),(4,2),(4,3)}
        \fill \P circle (0.4);
    \end{scope}
    \begin{scope}[xshift=+15cm]
    \foreach \x in {0,1,...,9}
    \foreach \y in {0,1,...,9}
      {
        \draw (\x,\y) circle (0.4);
      }
    \foreach \P in {(5,0),(6,0),(5,1),(6,1),(5,2),(6,2),(5,3),(6,3),(5,4),(6,4)}
        \fill \P circle (0.4);
    \end{scope}
\end{tikzpicture}
\]

Substituting into the first sum, and recalling that we are computing in \(\FF_2\), we obtain for each \(m \in \{1,\ldots,e\}\)
\[
    \sum_{l < k \leq m} X_{l,k} + \sum_{k < l \leq m} X_{k,l} + \sum_{k \leq m < l \leq e} X_{k,l} = \sum_{k = 1}^m b_k + \sum_{l < k \leq m} M_{k,l}.
\]
The first two terms are equal (exchange \(k\) and \(l\)). Therefore, we arrive at 
\begin{equation} \label{eq:equivalent_system}
    \sum_{k \leq m < l \leq e} X_{k,l} = \sum_{k = 1}^m b_k + \sum_{l < k \leq m} M_{k,l}, \quad m \in \{1,\ldots,e\}.
\end{equation}
For \(m = e\), we have in particular
\[
    \sum_{k=1}^e b_{k} + \sum_{l<k \leq e}M_{k,l} = 0,
\]
which show the necessity of \cref{eq:linear_system_necessary}. 
Furthermore, \(X_{k,k+1}\) appears only in \cref{eq:equivalent_system} with \(m = k\). Thus, we can take the additional \(e-1\) pivots \(X_{k,k+1}\), \(k \in E\). We have used up all of the \(r(r+1)/2\) linear equations and obtained 
a total of \(e-1+r(r-1)/2+r-e=r(r+1)/2 -\delta_{e>0}\) pivots. 
This shows that the solution space is of dimension \(r(r-1)/2+\delta_{e>0}\).
For \(r = 10\) and \(e = 7\), the pivots are displayed in the following matrix.

\[
\begin{tikzpicture}[yscale=-0.2,xscale=0.2]
    \foreach \x in {0,1,...,9}
    \foreach \y in {0,1,...,9}
      {
        \draw (\x,\y) circle (0.4);
      }
    \foreach \k in {0,1,...,5}
      {
        \fill ({\k+1},\k) circle (0.4);
      }
    \foreach \k in {7,8,9}
      {
        \fill (\k,\k) circle (0.4);
      }
    \fill (0,1) circle (0.4);
    \foreach \k in {1,...,8}
    \foreach \l in {0,1,...,\k}
         \fill (\l,{\k+1}) circle (0.4);
\end{tikzpicture} \qedhere
\]
\end{proof}

\begin{algorithm}
    \caption{Hensel\_qf\_unimodular\_even}\label{alg:hensel_modular_even}
    
    \begin{algorithmic}[1]
        \REQUIRE \(F, G, Z \in \IZ_2^{r\times r}\), \(a,b \in \IZ\) such that \(a \geq 1\), \(G\) is unimodular and \((F,G,Z)\) is \(a\)-approximate. 
        \ENSURE \(F'\) with \(F' \equiv F \bmod 2^a\) such that \((F',G,Z)\) is \(b\)-approximate. 
        \IF{\(a = 1\)}
            \STATE \(A \defeq FGF^\intercal - Z\)
            \STATE solve the following linear system for \(X \in \IZ_2^{r \times r}\) \label{eq:LS_Hensel_even}
            \begin{align*}
                    X + X^\intercal & \equiv 2^{-1}A   \mod 2 \\
                \textstyle X_{k,k} + \sum_{l=1}^r X_{k,l} (Z^{-1})_{l,l} & \equiv 2^{-2}A_{k,k}  \mod 2, \quad k \in \{1,\ldots,r\}
            \end{align*}
            \STATE \(Y \defeq 2X(GF^\intercal)^{-1}\)
            \STATE \(F \defeq F - Y\)
            \STATE \(a \defeq 2\) \label{eq:a=2}
        \ENDIF
        \WHILE{\(a < b\)}
            \STATE \(A \defeq FGF^\intercal - Z\) \label{step:A=FGF-Z}
            \STATE find \(L\) strictly upper triangular and \(D\) diagonal
                   with \(A = L + L^\intercal  + D\)
            \STATE \(X \defeq (L + 2^{-1}D) \left(GF^\intercal \right)^{-1}\)
            \STATE \(F \defeq F - X\)
            \STATE \(a \defeq 2a-1\) \label{eq:a=2a-1}
        \ENDWHILE
        \RETURN \(F\)
    \end{algorithmic}
\end{algorithm}
\begin{proof}[Proof of \Cref{alg:hensel_modular_even}]
If \(b \leq a\), there is nothing to do. 

Suppose first that \((F,G,Z)\) is an \(a\)-approximate triple with \(a \geq 2\). Put \(A = FGF^\intercal - Z\) as in Step \ref{step:A=FGF-Z} of the algorithm. Clearly, there exist a unique strictly upper triangular matrix \(L\) and a unique diagonal matrix \(D\) with \(A = L + L^\intercal + D\). By \cref{eq:approximate_all_p}, it holds \(L \equiv 0 \bmod 2^{a}\) and, by \cref{eq:approximate_p=2_all_a}, \(D \equiv 0 \bmod 2^{a+1}\). 

Set \(X = (L+2^{-1}D)(GF^\intercal)^{-1}\) and \(F' = F - X\), as in the algorithm.
By hypothesis, \(G\) consists of a single unimodular block, so \(F'\) is \(G\)-compatible, and \(F' \equiv F \bmod 2^a\). Moreover, we have
\begin{align*}
    F'GF'^\intercal - Z 
        & = FGF^\intercal - Z - XGF^\intercal - (XGF^\intercal)^\intercal + XGX^\intercal \\
        & = A - (L+2^{-1}D) - (L^\intercal +2^{-1}D) + XGX^\intercal \\
        & = XGX^\intercal \equiv 0 \mod 2^{2a},
\end{align*}
i.e., \((F',G,Z)\) is a \((2a-1)\)-approximate triple, which justifies Step~\ref{eq:a=2a-1}.

Suppose now that \((F,G,Z)\) is a \(1\)-approximate triple. As in the algorithm, we set \(A = FGF^\intercal - Z\).

The linear system in Step~\ref{eq:LS_Hensel_even} is in fact a linear system over \(\FF_2\) as in \Cref{lem:has_solution} with \(M = 2^{-1}A\), \(b_{k} = 2^{-2}A_{k,k}\) and \(z_k = (Z^{-1})_{k,k}\) for \(k \in \{1,\dots, r\}\).
By~\Cref{lem:has_solution}, a sufficient and necessary condition for a solution to exist is \cref{eq:linear_system_necessary}, which here can be equivalently written as
\begin{equation} \label{eqn:hassolution}
	zAz^\intercal \equiv 0 \mod 8.
\end{equation}
By \Cref{rmk:oddity_G}, \(z Z\) is an oddity vector for \(Z^{-1}\), hence \(zZz^{\intercal} = zZ Z^{-1} (Z z)^\intercal\) is the oddity of \(Z^{-1}\). 
On the other hand, \(Z\) and \(Z^{-1}\) have the same oddity, because the are both Gram matrices for the same lattice with respect to different bases.
Since \(FGF^\intercal \equiv Z \bmod 2\) by \cref{eq:approximate_all_p}, the same argument yields that \(z FGF^\intercal z^\intercal\) is the oddity of \(FGF^\intercal\). As \((F,G,Z)\) is \(1\)-approximate, \(FGF^\intercal\) and \(Z\) have the same oddity by \Cref{rmk:oddity_H}. Therefore, \cref{eqn:hassolution} holds true and the linear system in Step~\ref{eq:LS_Hensel_even} can be solved.
 
Set \(Y = 2X(GF^\intercal)^{-1}\) and \(F' = F - Y\), as in the algorithm. By hypothesis, \(G\) consists of a single unimodular block, so \((F',G)\) is a compatible pair and \(F' \equiv F \bmod 2\). We claim that \((F',G,Z)\) is \(2\)-approximate, in order to justify Step~\ref{eq:a=2}. Indeed, we have
\begin{align*}
    F'GF'^\intercal - Z
        & = FGF^\intercal - Z - YGF^\intercal - (YGF^\intercal)^\intercal + YGY^\intercal \\
        & = A - 2X - 2X^\intercal + YGY^\intercal \equiv 0 \mod 4.
 \end{align*}
Finally, we need to show that the diagonal of \(F'GF'^\intercal - Z\) vanishes modulo \(8\). Indeed, we have
 \begin{align*}
    (F'GF'^\intercal - Z)_{k,k} & = (A - 2X - 2X^\intercal + YGY^\intercal)_{k,k} \\
         & = A_{k,k} - 2X_{k,k} - 2(X^\intercal)_{k,k} + 4(X(FGF^\intercal)^{-1}X^\intercal)_{k,k} \\
         & \equiv A_{k,k} - 4X_{k,k} + 4(XZ^{-1}X^\intercal)_{k,k} \mod 8 \\
         & = A_{k,k} - 4X_{k,k} + 4\sum_{l,m=1}^r X_{k,l}(Z^{-1})_{l,m}(X^\intercal)_{m,k} \\
         & \equiv A_{k,k} - 4X_{k,k} + 4\sum_{l=1}^r X_{k,l}^2 (Z^{-1})_{l,l} \mod 8 \\
         & \equiv A_{k,k} - 4X_{k,k} + 2^{i+2}\sum_{l=1}^r X_{k,l} (Z^{-1})_{l,l} \mod 8
 \end{align*}
for all \(k \in \{1,\ldots,r\}\), because \(X_{k,l}^2 \equiv X_{k,l} \bmod 2\). Therefore, we have 
\[
    (F'GF'^\intercal - Z)_{k,k} \equiv 0 \mod 8
\] by the defining equations of~\(X\).
\end{proof}

\begin{algorithm}
    \caption{Hensel\_qf} \label{alg:hensel}

    \begin{algorithmic}[1]
        \REQUIRE \(F,G,Z \in \IZ_p^{r \times r}\), \(a,b \in \IZ\) such that \(a \geq 1\) and \((F, G, Z)\) is \(a\)-approximate.
        \ENSURE \(F' \in \IZ_p^{r \times r}\) with
        \begin{equation} \label{eq:F'-F}
            (F'-F)_{(i,j)} \equiv 0 \mod p^{a + \max(i-j,0)}
        \end{equation}
        for all \(i,j\), and such that \((F',G,Z)\) is \(b\)-approximate.
    \STATE \(i_1 \defeq \min\{i \mid G_{(i)} \neq 0\}\)
    \STATE \(G_1 \defeq G_{(i_1)}\)
    \STATE Define \(G_2\), and \(F_{ij}\), \(Z_{ij}\) for \(i,j \in \{1,2\}\), so that \(F_{11}\) and \(Z_{11}\) have the same size as \(G_1\) and, moreover,
    \[
        F = \begin{pmatrix} F_{11} & F_{12} \\ F_{21} & F_{22} \end{pmatrix} \quad 
        G = \begin{pmatrix} G_{1} & 0 \\ 0 & G_{2} \end{pmatrix} \quad
        Z = \begin{pmatrix} Z_{11} & Z_{12} \\ Z_{21} & Z_{22} \end{pmatrix}
    \]
    \STATE \(Z_{11}' \defeq Z_{11} - F_{12} G_2 F_{12}^\intercal\)
    \STATE \(F_{11} \defeq \mathrm{Hensel\_qf\_unimodular}(F_{11}, p^{-i_1} G_1, p^{-i_1} Z_{11}', a,b)\) \label{step:F11}
    \IF{\(G_2 = 0\)}
        \RETURN \(F\)
    \ENDIF
    \STATE \(s \defeq \min\{j - i_1 \mid j>i_1,\,G_{(j)} \neq 0\}\)
    \WHILE{\(a < b\)}
        \STATE \(Z_{22}' \defeq Z_{22} - F_{21} G_1 F_{21}^\intercal\)
        \STATE \(F_{22} \defeq \mathrm{Hensel\_qf}(F_{22}, G_2, Z_{22}', a, a+s)\) \label{step:F22}
        \STATE \(F_{21} \defeq (Z_{21} - F_{22} G_2 F_{12}^\intercal)(G_1 F_{11}^\intercal)^{-1}\) \label{step:F21}
    \STATE \(a \defeq a + s\) \label{step:a=a+s}
    \ENDWHILE
    \RETURN \(F\)
    \end{algorithmic}
\end{algorithm}

\begin{proof}[Proof of Algorithm \ref{alg:hensel}]
If \(b \leq a\), there is nothing to do. 
After defining \(i_1\), \(G_1\) and all other submatrices as in the algorithm, we have
\[
    F G F^\intercal - Z = \begin{pmatrix}
        F_{11} G_{1} F_{11}^\intercal + F_{12} G_2 F_{12}^\intercal - Z_{11} &
        F_{11} G_{1} F_{21}^\intercal + F_{12} G_2 F_{22}^\intercal - Z_{12} \\
        F_{21} G_{1} F_{11}^\intercal + F_{22} G_2 F_{12}^\intercal - Z_{21} &
        F_{21} G_{1} F_{21}^\intercal + F_{22} G_2 F_{22}^\intercal - Z_{22}
    \end{pmatrix}
\]
We put \(Z_{11}' = Z_{11} - F_{12} G_2 F_{12}^\intercal\) and \(Z_{22}' = Z_{22} - F_{21} G_1 F_{21}^\intercal\), as in the algorithm. Since \(G_2 \equiv 0 \bmod p^{i_1+1}\) and \(F\) is \(G\)-compatible, \(Z_{11}\equiv Z_{11}' \bmod p^{i_1+1}\) and \((Z_{22})_{(i)} \equiv (Z_{22}')_{(i)} \bmod p^{i+1}\).
Therefore, when \(p =2\), the oddity vectors do not change when passing from \(Z\) to \(Z'\) (although the oddity might change). 

Looking at the diagonal blocks of \(F G F^\intercal - Z\), we observe that from the fact that \((F,G,Z)\) is \(a\)-approximate, it follows that both \((F_{11}, G_1, Z_{11}')\) and \((F_{22}, G_2, Z_{22}')\) are \(a\)-approximate. 
Hence, we are entitled to apply \Cref{alg:hensel_modular_odd} or \Cref{alg:hensel_modular_even} to \((F_{11}, p^{-i_1} G_1, p^{-i_1} Z_{11}')\), given that \(G_1\) is \(p^{i_1}\)-modular by definition. 
In this way, we obtain at Step~\ref{step:F11} a matrix \(F_{11}'\) with 
\begin{equation} \label{eq:F11'-F11}
    F_{11}' - F_{11} \equiv 0 \bmod p^a
\end{equation} 
such that \((F_{11}', G_1, Z_{11}')\) is \(b\)-approximate.

If \(G_2 = 0\), that is, \(G = G_1\) and \(Z = Z_{11} = Z_{11}'\), then there is nothing else to do. 
Otherwise, \(s\) is well defined and \(G_2\) has less modular blocks by than \(G\). 
By induction on the number of blocks, we obtain at Step~\ref{step:F22} a matrix \(F_{22}'\) with 
\begin{equation} \label{eq:F22'-F22}
    (F_{22}' - F_{22})_{(i,j)} \equiv 0 \mod p^{a + \max(i-j,0)}
\end{equation} 
for all \(i,j\), such that \((F_{22}', G_2, Z_{22}')\) is \((a+s)\)-approximate. 
As in the algorithm, we put \(F_{21}' = (Z_{21} - F_{22}' G_2 F_{12}^\intercal)(G_1 F_{11}'^\intercal)^{-1}\) and 
\[
    F' = \begin{pmatrix} F_{11}' & F_{12} \\ F_{21}' & F_{22}' \end{pmatrix}.
\]

We first show that \cref{eq:F'-F} holds. Given \cref{eq:F11'-F11,eq:F22'-F22}, we only need to prove that, for all \(i > i_1\),
\[
    (F_{21}' - F_{21})_{(i,i_1)} \equiv 0 \mod p^{a + i - i_1}.
\]

Indeed, it holds \(G_2 \equiv 0 \bmod p^{i}\), so again by \cref{eq:F11'-F11,eq:F22'-F22}, we get
\begin{align*}
    (F_{21}' - F_{21})_{(i,i_1)}
        & = (Z_{21} - F_{22}' G_2 F_{12}^\intercal  - F_{21} G_1 F_{11}'^\intercal)_{(i,i_1)}(G_1 F_{11}'^\intercal)^{-1} \\
        & \equiv (Z_{21} - F_{22} G_2 F_{12}^\intercal  - F_{21} G_1 F_{11}^\intercal)_{(i,i_1)}(G_1 F_{11}'^\intercal)^{-1} \mod p^{a + i - i_1} \\
        & \equiv (Z-FGF^\intercal)_{(i,i_1)} (G_{1} F_{11}'^\intercal)^{-1} \mod p^{a + i - i_1}
\end{align*}
which is equivalent to \(0 \bmod p^{a + i -i_1}\), since \((F,G,Z)\) is \(a\)-approximate. 

It only remains to prove that \((F', G, Z)\) is \((a+s)\)-approximate in order to justify Step~\ref{step:a=a+s}. 
We define \(Z_{22}'' = Z_{22} - F_{21}' G_1 F_{21}'^\intercal\). 
By definition of \(F_{21}'\), we have
\[
    F' G F'^\intercal - Z = \begin{pmatrix}
    F'_{11} G_1 F_{11}'^\intercal - Z_{11}' & 0 \\ 0 & F_{22}' G_2 F_{22}'^\intercal -Z_{22}''
    \end{pmatrix}
\]
Thus, we need to prove equivalently that \((F_{22}', G_2, Z_2'')\) is \((a+s)\)-approximate. 

We define the matrices \(A = F_{22}'G_2F_{22}'^\intercal - Z_{22}'\), \(B = (F_{21}' - F_{21}) G_1 F_{21}'^\intercal\) and \(C = (F'_{21} - F_{21}) G_1 (F_{21}^\intercal - F_{21}'^\intercal)\), so that
\[
    F_{22}'G_2F_{22}'^\intercal - Z_{22}'' = A +  B + B^\intercal + C.
\]

In order to show \cref{eq:approximate_all_p} for \((F_{22}', G_2, Z_{22}'')\), we estimate the four terms separately. 
Since \((F_{22}', G_2, Z_{22}')\) is \((a+s)\)-approximate by construction, we have \(A_{(i,j)} \equiv 0 \bmod p^{a+s+\max(i,j)}\) for all \(i,j\).
By \cref{eq:F'-F}, we have
\begin{equation} \label{eq:B_oddity}
    B_{(i,j)} = (F_{21}' - F_{21})_{(i,i_1)} G_1 (F_{21}'^\intercal)_{(i_1,j)} \equiv 0 \mod p^\beta
\end{equation}
with
\[
    \beta = a + \max(i-i_1,0) + i_1 + \max(j-i_1,0) \geq a + s + \max(i,j).
\]
Further, we have
\begin{equation} \label{eq:C_oddity}
    C_{(i,j)} = (F_{21}' - F_{21})_{(i,i_1)} G_1 (F_{21}^\intercal - F_{21}'^\intercal)_{(i_1,j)} \equiv 0 \mod p^\gamma
\end{equation}
with
\begin{align*}
    \gamma & = a + \max(i - i_1,0) + i_1 + a + \max(j - i_1,0)  \\
      & = 2a + i + j - i_1 \geq a + s + \max(i,j) + 1.
\end{align*}
(Note that, since \(a \geq 1\), it holds \(2a \geq a +1\).)

Suppose now that \(p = 2\). In order to show \cref{eq:approximate_p=2_all_a} for \((F_{22}',G_2,Z_{22}'')\), we estimate the three terms \(A\), \(B + B^\intercal\) and \(C\) separately. Since \((F_{22}', G_2, Z_{22}')\) is \((a+s)\)-approximate by construction, we have \((A_{(i,i)})_{k,k} \equiv 0 \bmod 2^{a+s+i+1}\) for all \(i\) and all \(k \in \{1,\ldots,r_i\}\). 
By \cref{eq:B_oddity}, it holds
\[
((B+B^\intercal)_{(i,i)})_{k,k} \equiv 2(B_{(i,i)})_{k,k} \mod 2^{a+s+i+1}.
\]
Finally, \cref{eq:C_oddity} implies that also \((C_{(i,i)})_{k,k} \equiv 0 \bmod 2^{a+s+i+1}\). 

As \(a+s \geq 2\), we do not need to check \cref{eq:approximate_p=2_a=1}.
This shows that \((F_{22}', G_2, Z_{22}'')\) is \((a+s)\)-approximate, and concludes the proof.
\end{proof}

\section{Generators of orthogonal groups} \label{sec:generators}

Let \((L,q)\) be an integral \(\ZZ\)-lattice. 
If \(L\) is even, then its discriminant group \(L^\sharp = L^\vee/L\) carries the \(\QQ/\ZZ\) valued \emph{discriminant quadratic form}
\[
	q^\sharp(x+ L) = q(x) + \ZZ.
\]
If \(L\) is odd, then \(L^\sharp\) carries the \(\QQ/\ZZ\)-valued \emph{discriminant bilinear form}
\[
	b^\sharp(x+L,y+L) = b(x,y) + \ZZ.
\]
Note that the Sylow decomposition \(L^\sharp  = \bigoplus_p L^\sharp_p\) is orthogonal with respect to \(b^\sharp\), where \(L^\sharp_p\) is the discriminant group of \(L\otimes \ZZ_p\). 
This results in a decomposition of \(O(L^\sharp)\) as a direct product, and reduces the problem of finding generators for \(O(L^\sharp)\) for a \(\IZ_p\)-lattice.

For an integral \(\IZ_p\)-lattice \(L\) with a Jordan decomposition \(L = \bigoplus_{i = 0}^n L_i\), the map \(O(L/p^nL) \to O(L^\sharp)\) is surjective by a result of Nikulin \cite[Cor.~1.9.6, and Thm.~1.16.4]{nikulin:bilinear}. 
In this section, we study the structure of \(O(L)\) and of the finite groups \(O(L/p^nL)\). Moreover, we provide a small set of generators for the latter.
(It is known that \(O(L)\) is generated by reflections and Eichler transformations. However, the number of all such transformations is too big to be useful for computations on the finite group \(O(L/p^nL)\).)

The following technical lemma is needed for the proof of \Cref{prop:surj}.
\begin{lemma}\label{lem:strange}
    Let \(G \in \IZ_2^{r \times r}\) be a unimodular symmetric matrix, and let \(F \in \IZ_2^{r \times r}\) be an invertible matrix such that \(F G F^\intercal \equiv G \bmod 2\). 
If \(h \in \IZ_2^r\) is the vector given by \(h_k = 2^{-1}(F G F^\intercal - G)_{k,k}\) for \(k \in \{1,\ldots,r\}\), then
\[
    h (FGF^\intercal)^{-1} h^\intercal \equiv 0 \mod 4.
\]
\end{lemma}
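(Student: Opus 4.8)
The plan is to read the hypothesis $FGF^\intercal\equiv G\bmod 2$ as saying that $\bar F:=F\bmod 2$ lies in the orthogonal group of the $2$-elementary bilinear form attached to $G$, and then to extract the congruence from \Cref{cor:bijection}, which forces the defect of such an element to be isotropic. Write $H:=FGF^\intercal$; both $G$ and $H$ are unimodular (the latter since $H\equiv G\bmod 2$), so $G^{-1},H^{-1}\in\ZZ_2^{r\times r}$. Equip $V=\FF_2^r$ with the $2$-elementary quadratic form $\bar q$ and its associated bilinear form $\bar b$ obtained from the unimodular block $G$ as in \Cref{def:qi}, so that $\bar b(x,y)=xGy^\intercal\bmod 2$ and $\bar q(x)=\tfrac12 xGx^\intercal\bmod 2\ZZ_2$; both are non-degenerate. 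The hypothesis is precisely $\bar F\in O(\bar b)$, so $\bar F$ has a defect $v:=v_{\bar F}\in V$ (\Cref{def:defect}).

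The first real step is to identify $v$ with a multiple of $h$. Evaluating the defining relation $\bar b(v,e_k)=\bar q(e_k\bar F)-\bar q(e_k)$ on the standard basis vectors $e_k$, the right-hand side is $\tfrac12\big((FGF^\intercal)_{kk}-G_{kk}\big)\bmod 2=h_k\bmod 2$, while the left-hand side equals $(cG)_k\bmod 2$, where $c\in\FF_2^r$ is the coordinate vector of $v$. Hence $cG\equiv h\bmod 2$, so $u:=hG^{-1}\in\ZZ_2^r$ reduces to $c$ and therefore represents $v$. By \Cref{cor:bijection} we have $\bar q(v)=0$; computing $\bar q(v)$ along the representative $u$ gives $\tfrac12\,uGu^\intercal=\tfrac12\,hG^{-1}h^\intercal$, which must therefore lie in $2\ZZ_2$, i.e.\ $hG^{-1}h^\intercal\equiv 0\bmod 4$.

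It remains to pass from $G^{-1}$ to $H^{-1}$. Write $H-G=2S$ with $S=S^\intercal\in\ZZ_2^{r\times r}$ and $S_{kk}=h_k$. The identity $H^{-1}-G^{-1}=-2H^{-1}SG^{-1}$, substituted once into itself, yields $H^{-1}\equiv G^{-1}-2G^{-1}SG^{-1}\bmod 4$, all matrices being integral. Since $uSu^\intercal\equiv\sum_k u_k^2 S_{kk}=\sum_k u_k^2 h_k\equiv\sum_k u_k h_k=hG^{-1}h^\intercal\bmod 2$ (using $u_k^2\equiv u_k$ in $\FF_2$), we obtain
\[
    hH^{-1}h^\intercal\equiv hG^{-1}h^\intercal-2\,hG^{-1}SG^{-1}h^\intercal=hG^{-1}h^\intercal-2\,uSu^\intercal\equiv-hG^{-1}h^\intercal\equiv 0\bmod 4,
\]
as claimed.

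The only point that needs care is the second paragraph: one must check that $\bar q$ is well defined on $\FF_2^r$ with $\bar b$ as its associated bilinear form (this uses that $G$ is integral) and keep the row-vector conventions straight, so that $\bar q(e_k\bar F)$ really is $\tfrac12 H_{kk}\bmod 2\ZZ_2$. The rest is formal.
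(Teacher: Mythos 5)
Your proof is correct, and its core coincides with the paper's: interpret \(hG^{-1}\) as (a representative of) the defect \(v_{\bar F}\) of the isometry that \(F\) induces on the \(2\)-elementary form attached to \(G\), and invoke \Cref{cor:bijection} to get isotropy. You diverge only in how the target quantity \(h(FGF^\intercal)^{-1}h^\intercal\) is reached. The paper uses the other half of \Cref{cor:bijection}, namely \(q(fv_f)=0\): setting \(w=h(GF^\intercal)^{-1}\), one has the identity \(h(FGF^\intercal)^{-1}h^\intercal=wGw^\intercal\) and \(w\equiv hG^{-1}F\bmod 2\) represents \(fv_f\), so the congruence is immediate. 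You instead use only \(q(v_f)=0\), i.e.\ \(hG^{-1}h^\intercal\equiv 0\bmod 4\), and then transfer from \(G^{-1}\) to \(H^{-1}=(FGF^\intercal)^{-1}\) via the mod-\(4\) expansion \(H^{-1}\equiv G^{-1}-2G^{-1}SG^{-1}\) and the observation \(uSu^\intercal\equiv uh^\intercal\bmod 2\) (which works because \(S\) is symmetric with diagonal \(h\) and \(u_k^2\equiv u_k\)). Both closings are sound; the paper's is a one-line identity, while yours is slightly longer but more elementary, never needing the isotropy of the image \(fv_f\). Your bookkeeping checks out, including the harmless sign flip \(hH^{-1}h^\intercal\equiv -hG^{-1}h^\intercal\bmod 4\).
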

\begin{proof}
Consider the \(2\)-elementary quadratic form \(q \colon (\IZ/2\IZ)^r \rightarrow \IQ/2\IZ\) defined through the isomorphism \(\IZ_2/2\IZ_2 \cong \IZ/2\IZ\) by the formula
\(q(x) = \frac12 xGx^\intercal \bmod 2\),
and let \(b\) be its associated bilinear form given by \(b(x,y) = xGy^\intercal \bmod 2\).

By hypothesis, the matrix \(F\) defines an isometry \(f \in O(b)\) with respect to the standard basis.
The vector \(hG^{-1}\) represents the defect \(v_f\) of \(f\) (\Cref{def:defect}).

Consider the vector \(w = h(GF^\intercal)^{-1}\). 
A straightforward computation shows that \(h(FGF^\intercal)^{-1}h^\intercal = wGw^\intercal\).
Since \(GF^\intercal \equiv F^{-1}G \bmod 2\), we have \(w \equiv hG^{-1}F \bmod 2\), so \(w\) represents \(fv_f\).
By \Cref{cor:bijection}, it holds \(q(fv_f)=0\), that is, \(wGw^{\intercal} \equiv 0 \bmod 4\), which is what we wanted to prove.
\end{proof}

Let \((L,q) = \bigoplus_i (L_i, p^i q_i)\) be a Jordan decomposition of a \(\IZ_p\)-lattice.
Following Conway and Sloane \cite{conway-sloane:mass_formula}, we say that \(L_i\) is \emph{free} if both lattices \((L_{i-1}, q_{i-1})\) and \((L_{i+1}, q_{i+1})\) are even.
Otherwise, we say that \(L_i\) is \emph{bound}. Note that the zero lattice is even. For \(p \neq 2\), all constituents are free.

We need a natural definition of the \(p\)-elementary forms \(\bar{q}_i\) and \(\bar{b}_i\) which is independent of the chosen Jordan decomposition.
For \(\bar{b}_i\) this works out, but for \(\bar{q}_i\) to be naturally defined, we need \(L_i\) to be free. 

\begin{definition} \label{def:rho_i(L)}
Set \(H_i = (p^i L^\vee \cap L)+p L\) and define 
\[
	\rho_i(L) = H_i/H_{i+1}.
\]
If \(L_i\) is free, we equip \(\rho_i(L)\) with the \(p\)-elementary quadratic form
\[
	\bar{q}_i(x + H_{i+1}) = q_i(x) \mod p,
\] 
whose bilinear form is non-degenerate and given by 
\[
	\bar{b}_i(x+H_{i+1},y+H_{i+1}) = b_i(x,y) \mod p.
\] 
If \(L_i\) is bound, then \(\bar{q}_i\) is not well defined and we equip \(\rho_i(L)\) merely with the bilinear form \(\bar{b}_i\).
\end{definition}

\begin{remark}
We have that \(H_k = \bigoplus_{i<k}p L_i \oplus \bigoplus_{i\geq k}L_i\) for any Jordan decomposition of \(L\). 
Therefore, \(\rho_i(L) \cong L_i/pL_i\). This shows that the two definitions of \(\bar{q}_i\) in \Cref{def:qi,def:rho_i(L)} are canonically isomorphic if \(L_i\) is free.
Note that if \(L_i\) is bound, then \(\bar{q}_i\) as in \Cref{def:qi} depends on the choice of a Jordan decomposition.
\end{remark}  

\begin{proposition} \label{prop:surj}
For any \(\ZZ_p\)-lattice \((L,q)\), the natural homomorphism
\[
    O(L,q) \rightarrow \prod_i O(\rho_i(L))
\]
is surjective.
\end{proposition}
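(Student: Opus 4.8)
The plan is to fix a Jordan decomposition $L=\bigoplus_i L_i$, so that $\rho_i(L)\cong L_i/pL_i$ carries the quadratic form $q_i\bmod p$ when $L_i$ is free and only the bilinear form $\bar b_i$ when $L_i$ is bound (\Cref{def:rho_i(L)}). Since the image of $O(L,q)\to\prod_i O(\rho_i(L))$ is a subgroup of a direct product, it suffices to realize, for each index $i$ and each $g\in O(\rho_i(L))$, an isometry of $L$ acting as $g$ on $\rho_i(L)$ and as the identity on every $\rho_j(L)$ with $j\neq i$; the images of such isometries generate the product. To construct one, I will exhibit a matrix $F\in\IZ_p^{r\times r}$, invertible and $G$-compatible (\Cref{def:G-compatible}), whose reduction modulo $p$ is a block matrix acting as $g$ on the $i$-th Jordan block and trivially elsewhere, and such that $(F,G,G)$ is a $1$-approximate triple (\Cref{def:approximate}). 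Running \Cref{alg:hensel} on $(F,G,G)$ successively with $b=2,3,4,\dots$ yields matrices $F'$ with $F'\equiv F\bmod p$ and $(F',G,G)$ arbitrarily approximate; by \eqref{eq:F'-F} these form a $p$-adically Cauchy sequence whose limit $\phi$ satisfies $\phi G\phi^{\intercal}=G$, hence $\phi\in O(L)$, and $\phi\equiv F\bmod p$. Being an isometry, $\phi$ preserves $H_k=(p^kL^\vee\cap L)+pL$ and so acts on $\rho_k(L)=H_k/H_{k+1}$; since $\phi\equiv F\bmod p$ and $pL\subseteq H_{k+1}$, this action is the one induced by $F$, namely $g$ on $\rho_i(L)$ and $\id$ elsewhere. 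Everything thus reduces to producing $F$.

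If $L_i$ is free -- in particular always when $p\neq 2$ -- I take $F=\tilde g\oplus\id$ for an arbitrary integral lift $\tilde g$ of $g$. Writing $G_{(i)}=p^iG_i'$ with $G_i'$ unimodular, the off-diagonal blocks of $FGF^{\intercal}-G$ vanish and \eqref{eq:approximate_all_p} reduces to $\tilde gG_i'\tilde g^{\intercal}\equiv G_i'\bmod p$, which holds because $g\in O(\bar b_i)$. For $p=2$, \eqref{eq:approximate_p=2_all_a} amounts to $q_i(\tilde ge_k)\equiv q_i(e_k)\bmod 2$ for $q_i=\tfrac12 xG_i'x^{\intercal}$, which holds because $g$ preserves $q_i\bmod 2$; and \eqref{eq:approximate_p=2_a=1}, which asks that the $(i,i)$-blocks of $FGF^{\intercal}$ and $G$ have the same oddity, follows from the defining property of the oddity vector $v_i$ of $G_i'$ (\Cref{def:oddity_vector}): since $g$ fixes $v_i$ modulo $2$, writing $v_i\tilde g=v_i+2w$ the oddity difference equals $4\bigl(b(v_i,w)+b(w,w)\bigr)$, which is $\equiv 0\bmod 8$ because $b(v_i,w)\equiv b(w,w)\bmod 2$. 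So $(F,G,G)$ is $1$-approximate in this case.

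The content is the bound case: here $p=2$ and $g$ need only preserve $\bar b_i$, so $F=\tilde g\oplus\id$ may violate \eqref{eq:approximate_p=2_all_a} at level $i$, the diagonal entries $2h_k$ of $\tilde gG_i'\tilde g^{\intercal}-G_i'$, where $h_k=\tfrac12(\tilde gG_i'\tilde g^{\intercal}-G_i')_{k,k}$, being possibly $\not\equiv 0\bmod 4$. By hypothesis some neighbour is odd, say $L_{i+1}$ (the case $L_{i-1}$ odd is entirely analogous, one just works with $\tfrac12 F_{(i,i-1)}$, which \eqref{eq:compatible} forces to be integral). I pick $w_0$ with $w_0G_{i+1}'w_0^{\intercal}$ odd and set $F_{(i,i+1)}=h^{\intercal}w_0$ -- permitted since \eqref{eq:compatible} does not constrain this block -- so that the new diagonal entries of the $(i,i)$-block become $2h_k+2h_k^2(w_0G_{i+1}'w_0^{\intercal})\equiv 0\bmod 4$, repairing \eqref{eq:approximate_p=2_all_a}. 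To preserve \eqref{eq:approximate_all_p} in the $(i,i+1)$-block one is then forced to take $F_{(i+1,i)}\equiv 0\bmod 2$, determined modulo $4$; all other blocks stay block-diagonal. It is then routine that $pL_i$ and $pL_{i+1}$ absorb the two correction terms, so $F$ still acts as $g$ on $\rho_i(L)$ and trivially on the other $\rho_j(L)$, and one only needs to recheck \eqref{eq:approximate_all_p}, \eqref{eq:approximate_p=2_all_a}, \eqref{eq:approximate_p=2_a=1} for the blocks $i$ and $i+1$.

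This recheck is where I expect the real difficulty to lie, and it is where \Cref{cor:bijection} and its quantitative refinement \Cref{lem:strange} enter. Writing $\tilde H=\tilde gG_i'\tilde g^{\intercal}$ and letting $v_f$ be the defect of $g\in O(\bar b_i)$ (\Cref{def:defect}), \Cref{cor:bijection} gives $\bar q_i(v_f)=\bar q_i(fv_f)=0$ for the quadratic form $q_i\bmod 2$, equivalently (via \Cref{lem:strange}, with $h\equiv v_fG_i'\bmod 2$) $h\tilde H^{-1}h^{\intercal}\equiv 0\bmod 4$. Unwinding the definitions, this last congruence yields on the one hand $v_i\cdot h\equiv b(v_f,v_f)\equiv 0\bmod 2$, which forces the $F_{(i,i+1)}$-correction to leave the oddity of the $(i,i)$-block unchanged, and on the other hand it forces the functional $x\mapsto xG_i'x^{\intercal}\bmod 2$ to vanish on the rows of $\tfrac12 F_{(i+1,i)}$ (so \eqref{eq:approximate_p=2_all_a} still holds at level $i+1$) while being exactly the congruence that keeps the oddity of the $(i+1,i+1)$-block unchanged. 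Granting these, $(F,G,G)$ is $1$-approximate, and the reduction of the first paragraph completes the proof.
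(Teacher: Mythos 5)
Your proposal follows essentially the same route as the paper's proof: reduce to generators supported on a single Jordan constituent, lift block-diagonally in the free case, repair the bound case with the rank-one off-diagonal correction \(h^\intercal v\) (resp.\ \(2h^\intercal v\)) built from the diagonal defect \(h\) and an odd-norm vector in the adjacent odd constituent, verify \(1\)-approximateness via \Cref{lem:strange} and \Cref{cor:bijection}, and conclude by Hensel lifting and passing to the \(p\)-adic limit. The verifications you only sketch in your last paragraph are precisely the congruence computations carried out in detail in the paper, and they go through as you anticipate.
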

\begin{proof}
Choose a Jordan decomposition \((L,q) = \bigoplus_i (L_i, p^i q_i)\).
After rescaling the form \(q\), we may assume that \(L\) is integral, i.e., \((L,q) = \bigoplus_{i \geq 0} (L_i, p^i q_i)\). 
Choose a basis for \(L\) respecting the given Jordan decomposition, and let \(G \in \IZ_p^{r \times r}\) be the Gram matrix of \(L\) with respect to this basis, with modular blocks~\(G_{(i)} \in \IZ_p^{r_i \times r_i}\), where \(r\) and \(r_i\) denote the ranks of \(L\) and \(L_i\), respectively.

With respect to the chosen basis, any element \(f=\bigoplus_i f_i \in \prod_i O(\rho_i(L))\) is represented by a \(G\)-compatible, block-diagonal matrix \(F \in \IZ_p^{r \times r}\) with blocks \(F_{(i,i)} \in \IZ_p^{r_i \times r_i}\). We put \(A = FGF^\intercal - G\), which is also a block-diagonal matrix.
Recall that \(\rho_i(L)\) is endowed with a symmetric bilinear form~\(\bar{b}_i\) (\Cref{def:rho_i(L)}). By definition of \(O(\rho_i(L))\), the following holds for all \(i\):
\begin{equation} \label{eq:approximate_all_p_(FGG)}
    A_{(i,i)} \equiv 0 \mod p^{i + 1}.
\end{equation}

In particular, if \(p \neq 2\), the matrix \(F\) is \(1\)-approximate (\Cref{def:approximate}).
Hence, we can apply \Cref{alg:hensel} in order to find a convergent sequence of matrices \((F_n)_{n\in \NN}\), whose limit in \(\IZ_p^{r \times r}\) represents the desired preimage of \(f\). 

If \(p = 2\), \(F\) may not be \(1\)-approximate due to \cref{eq:approximate_p=2_all_a} or \cref{eq:approximate_p=2_a=1}.
In order to be able to apply \Cref{alg:hensel} and conclude as before, we need to find another \(G\)-compatible (not necessarily block-diagonal) matrix \(F'\) such that \(F'_{(i,i)} = F_{(i,i)}\) for all \(i\), and such that \(F'\) is \(1\)-approximate.

Since it suffices to lift generators, we may assume that \(f\) is of the form \(f_n \oplus \bigoplus_{i \neq n}\id_{L_i}\) with \(f_n \in O(\rho_n(L))\) for some \(n \geq 0\), i.e., we can assume that \(F_{(i,i)} = I\) for all \(i \neq n\), where \(I\) denotes the identity matrix.
Then, the entries of \(A\) off the block diagonal \(A_{(n,n)} = F_{(n,n)} G_{(n)} F_{(n,n)}^\intercal - G_{(n)}\) are equal to zero.

Suppose first that the Jordan component \(L_n\) is free, that is, both \(L_{n-1}\) and \(L_{n+1}\) are even.
In this case, \(\rho_n(L)\) is endowed with a quadratic form~\(\bar{q}_n\). 
By definition of \(O(\rho_i(L))\), following holds for all \(i\) and all \(k \in \{1,\ldots,r_i\}\):
\begin{equation*} \label{eq:approximate_p=2_all_a_(F,G,G)} 
    (A_{(i,i)})_{k,k} \equiv 0 \mod 2^{i+2}.
\end{equation*}
Since \(\bar f_i\) preserves the form~\(\bar{b}_i\), we have \(v_i \equiv v_{i}F_{(i,i)} \bmod 2\). Thus, the latter represents an oddity vector for \(L_i\) as well, which implies  \cref{eq:approximate_p=2_a=1}.
Hence, \(F\) is \(1\)-approximate, and we conclude.

Now, suppose that \(L_n\) is bound.
For later use, we define \(h \in \IZ_2^{r_n}\) by:
\begin{equation}\label{eq:def_h}
    h_k = 2^{-n-1}(A_{(n,n)})_{k,k}
\end{equation}
which is indeed integral by \cref{eq:approximate_all_p_(FGG)}.
Since \(hG_{(n)}^{-1}\) represents the defect \(w=v_{\bar f_n}\) of~\(\bar f_n\), we have
\begin{equation}\label{eq:vh}
v_n h^\intercal = v_n G G^{-1}h^\intercal=b_n(v_n,w)\equiv b_n(w,w) \equiv 0 \mod 2,
\end{equation}
where the last congruence follows from \Cref{cor:bijection}.

Let us start with the case that \(L_{n-1}\) is odd. Without loss of generality, we assume that \(G\) consists of only two modular blocks, \(G_{(n-1)}\) and \(G_{(n)}\). 
Then, there exists a vector \(v \in \IZ_2^{r_{n-1}}\) with
\begin{equation} \label{eq:L_n-1_odd}
    vG_{(n-1)}v^\intercal \equiv 2^{n-1} \mod 2^n.
\end{equation}

Furthermore, we define 
\begin{align*}
    F'_{(n,n-1)} &= 2h^\intercal v, \\
    F_{(n-1,n)}' &= -G_{(n-1)}F_{(n,n-1)}'^\intercal(G_{(n)} F_{(n,n)}^\intercal)^{-1},
\end{align*}
and
\[
    F' = \begin{pmatrix} I & F'_{(n-1,n)} \\ F'_{(n,n-1)} & F_{(n,n)} \end{pmatrix}.
\]
Note that \(F'\) is a \(G\)-compatible matrix, because \(F_{(n,n-1)}' \equiv 0 \bmod 2\).

We claim that \(F'\) is \(1\)-approximate. We define the matrix \(A' = F'GF'^\intercal - G\) and we compute
\[
    A' = 
    \begin{pmatrix}
    F_{(n-1,n)}' G_{(n)} F_{(n-1,n)}'^\intercal & 0 \\ 0 & F_{(n,n-1)}' G_{(n-1)} F_{(n,n-1)}'^\intercal - A_{(n,n)}
    \end{pmatrix}
\]
Using \cref{eq:approximate_all_p_(FGG)}, it is straightforward to check that
\begin{equation*}
    A'_{(i,i)} \equiv 0 \bmod 2^{i+1}
\end{equation*}
for both \(i = n-1\) and \(i = n\). 
Hence, \cref{eq:approximate_all_p} holds for \((F',G,G)\) with \(a = 1\).

We compute
\begin{align*}
    A'_{(n-1,n-1)} & = G_{(n-1)} 2v^\intercal h (F_{(n,n)}G_{(n)}F_{(n,n)}^\intercal)^{-1}2h^\intercal v G_{(n-1)} \\
        & = 4G_{(n-1)} v^\intercal h (F_{(n,n)}G_{(n)}F_{(n,n)}^\intercal)^{-1}h^\intercal v G_{(n-1)} \\
        & = 4h (F_{(n,n)}G_{(n)}F_{(n,n)}^\intercal)^{-1}h^\intercal G_{(n-1)} v^\intercal v G_{(n-1)}.
\end{align*}
By~\Cref{lem:strange}, it holds \(2^{n}h (F_{(n,n)}G_{(n)}F_{(n,n)}^\intercal)^{-1}h^\intercal \equiv 0 \bmod 4\), so 
\begin{equation} \label{eqn:A'}
    A'_{(n-1,n-1)} \equiv 0 \mod 2^{n+2}.
\end{equation}
Further, we compute
\begin{equation} \label{eq:A'_n-1odd}
    A'_{(n,n)} = 4 v G_{(n-1)}v^\intercal \cdot h^\intercal h - A_{(n,n)}.
\end{equation}
Using \cref{eq:L_n-1_odd} we obtain that
\(A'_{(n,n)}\equiv 2^{n+1} \cdot h^\intercal h - A_{(n,n)} \bmod 2^{n+2}\).
By the definition of \(h_k\) in \cref{eq:def_h}, this yields for the diagonal
\[
    (A'_{(n,n)})_{k,k}  \equiv 2^{n+1}h_k^2 - 2^{n+1} h_k \equiv 2^{n+1}h_k(h_k-1) \equiv 0 \mod 2^{n+2}.
\]
Hence, \cref{eq:approximate_p=2_all_a} holds for \((F',G,G)\) with \(a = 1\).
By \cref{eqn:A'}, 
\[
    v_{n-1} A'_{(n-1,n-1)}v_{n-1}\equiv 0 \mod 2^{n+2}.
\] 
\Cref{eq:vh,eq:A'_n-1odd} combined yield that
\[
    v_n A'_{(n,n)}v_n^\intercal=4 v G_{(n-1)}v^\intercal \cdot (h v_n^\intercal)^2 - v_n A_{(n,n)} v_n^\intercal \equiv -v_n A_{(n,n)} v_n^\intercal \mod 2^{n+3}.
\]
Since \(F_{(n,n)} \bmod 2\) represents the isometry \(f_n\) of the bilinear form~\(\bar{b}_n\), it fixes the oddity vector \(v_n\). Therefore, \(-v_n A_{(n,n)} v_n^\intercal \equiv 0 \bmod 2^{n+3}\).
Hence, \cref{eq:approximate_p=2_a=1} holds for \((F',G,G)\). This shows that \(F'\) is \(1\)-approximate.

Now, we turn to the case that \(L_{n+1}\) is odd. Without loss of generality, we assume that \(G\) consists of only two modular blocks, \(G_{(n)}\) and \(G_{(n+1)}\). Then, there exists a vector \(v \in \IZ_2^{r_{n+1}}\) with
\begin{equation} \label{eq:L_n+1_odd}
    vG_{(n+1)}v^\intercal \equiv 2^{n+1} \mod 2^{n+2}.
\end{equation}
By \cref{eq:approximate_all_p_(FGG)}, we can define \(h \in \IZ_2^{r_n}\) in the following way:
\[
    h_k = 2^{-n-1}(A_{(n,n)})_{k,k}.
\] 
Furthermore, we define 
\begin{align*}
    F'_{(n,n+1)} &= h^\intercal v, \\
    F_{(n+1,n)}' &= -G_{(n+1)}F_{(n,n+1)}'^\intercal(G_{(n)} F_{(n,n)}^\intercal)^{-1},
\end{align*}
and
\[
    F' = \begin{pmatrix} F_{(n,n)} & F'_{(n,n+1)} \\ F'_{(n+1,n)} & I \end{pmatrix}.
\]
Note that \(F'\) is a \(G\)-compatible matrix, because \(F_{(n+1,n)}' \equiv 0 \bmod 2\).

We claim that \(F'\) is \(1\)-approximate. 
The proof is analogous to the case of \(L_{n-1}\) odd. 
We define the matrix \(A' = G-F'GF'^\intercal\) and we compute
\[
    A' = 
    \begin{pmatrix}
    F_{(n,n+1)}' G_{(n+1)} F_{(n,n+1)}'^\intercal - A_{(n,n)} & 0 \\ 0 & F_{(n+1,n)}' G_{(n)} F_{(n+1,n)}'^\intercal 
    \end{pmatrix}
\]
Similar to before, we have
\[
    A'_{(n+1,n+1)}= h(F_{(n,n)} G_{(n)} F_{n,n)}^\intercal)^{-1} h^\intercal \cdot G_{(n+1)}(v^\intercal v) G_{(n+1)} \equiv 0 \mod 2^\alpha
\]
with \(\alpha=(2-n)+(n+1) + 0 + (n+1)= n+3\) by \Cref{lem:strange}. 
One calculates that 
\[A'_{(n,n)} = v G_{(n+1)}v^\intercal \cdot (h^\intercal h)-A_{(n,n)},\]
and, using \cref{eq:L_n+1_odd} as well as the definition of \(h_k\),
\[(A'_{(n,n)})_{k,k} \equiv 2^{n+1} h_k(h_k-1) \equiv 0 \mod 2^{n+2}.\]
This shows that \cref{eq:approximate_p=2_all_a} holds.
For \cref{eq:approximate_p=2_a=1}, we use \cref{eq:vh} and the fact that \(f(v_n)\equiv v_n \bmod 2\).
\end{proof}

Let \((L,q)\) be an integral \(\ZZ_p\)-lattice of rank \(r\). 
We now explain how to obtain generators for the group \(O(L/p^nL)\) from the proofs of \Cref{prop:order_orthogonal_elementary,prop:surj} and the quadratic Hensel lifting algorithm (\Cref{alg:hensel}). 

Choose a basis of \(L\) respecting a chosen Jordan decomposition, let \(G\) be the corresponding Gram matrix, and view \(O(L,q)\) as a group of matrices: 
\[
    O(L,q) = \{F \in \IZ_p^{r \times r} \mid FGF^\intercal = G\}.
\]
If \(F \in O(L,q)\), then the matrix \(F\) is \(a\)-approximate for any \(a\) (\Cref{def:approximate}). Conversely, we have the following identification, using \(\IZ_p/p^a\IZ_p \cong \IZ/p^a\IZ\): 
\[
    O(L/p^aL) = \{(F \bmod p^a) \in (\IZ/p^a\IZ)^{r \times r} \mid F \in \IZ_p^{r \times r}, \text{\(F\) is \(a\)-approximate}\}.
\]

Thanks to \Cref{prop:surj}, we can define \(K_0\) by the exact sequence
\[
    0 \longrightarrow  K_0 \longrightarrow O(L,q) \longrightarrow \prod_i O(\rho_i(L)) \longrightarrow 0.
\]
For \(a \geq 1\), let \(K_a\) be defined by the exact sequence
\[
    0 \longrightarrow  K_a \longrightarrow O(L,q) \longrightarrow O(L/p^aL) \longrightarrow 0.
\]

The following identifications hold, where \(I\) denotes the identity matrix:
\begin{align*}
    K_0 &= \{F \in \IZ_p^{r \times r} \mid FGF^\intercal = G, \, F_{(i,i)} \equiv I \bmod p \text{ for all } i\}, \\
    K_a &= \{F \in \IZ_p^{r \times r} \mid FGF^\intercal = G, \, F \equiv I \bmod p^a\}, \quad a \geq 1.
\end{align*}
It is then clear that we have the following subnormal series:
\[
    \dots \subseteq K_3 \subseteq K_2 \subseteq  K_1 \subseteq K_0 \subseteq O(L).
\]

To obtain generators of \(O(L/p^nL)\), it suffices to write down generators for each of the quotients of the subnormal series and lift them to precision \(p^n\) using the quadratic Hensel lifting procedure.

The proof of \Cref{prop:surj} describes how to write down generators for \(O(L)/K_0\) relying on generators of \(O(\rho_i(L))\). 
These can be inferred from the proof of \Cref{prop:order_orthogonal_elementary} and generators for the classical orthogonal and symplectic groups found in the literature \cite{generators1,generators2,generators3}.
For practical purposes, we use the generators provided by GAP \cite{GAP4}.

By the homomorphism theorem, we can view \(K_0/K_1\) as a subgroup of \(O(L/pL)\), and, more precisely, as the subgroup
\[
    K_0/K_1 = \{(F \bmod p) \in O(L/pL) \mid  \text{\(F_{(i,i)} \equiv I \bmod p\) for all \(i\)}\}.
\]
For \(p \neq 2\), \(K_0/K_1\) is just the \(p\)-group of upper unitriangular matrices with trivial block diagonal because of \cref{eq:compatible}.
For \(p = 2\), the equations cutting out this group will be described in \Cref{lem:K0}. 

Fix now \(a \geq 1\). 
Instead of \(K_a/K_{a+1}\), we write down generators for \(K_a/K_{2a}\), reducing the overall number of generators needed.
By the homomorphism theorem, we can identify \(K_a/K_{2a}\) with a subgroup of \(O(L/p^aL)\),
and, more precisely, with the subgroup \[K_a/K_{2a} = \{(F \bmod p^{2a}) \in O(L/p^{2a}L) \mid F \equiv I \bmod p^{a}\}.\]
Observe that, for any \(A,B \in \IZ_p^{r \times r}\), it holds \[(I + p^{a}A)(I + p^{a}B) \equiv I + p^{a}(A+B) \mod p^{2a}.\]
Thus, we can view \(K_a/K_{2a}\) as a free \(\IZ/p^a\IZ\)-submodule of \((\IZ/p^a\IZ)^{r \times r}\). 

In order to write down generators, we set \(F = I + p^aX\). Now, being \(2a\)-approximate gives linear conditions on \(X\). 
Essentially, there are no conditions on the upper block triangular part, while the lower block triangular part is determined by the rest and, on the diagonal part, we put what \Cref{alg:hensel_modular_odd} or \Cref{alg:hensel_modular_even} dictate. For \(p \neq 2\) or \(p = 2\) and \(a \geq 2\), we have for instance \(X_{(i,i)}= p^{i} H G_{(i,i)}^{-1}\), where \(H \in \ZZ_p^{r_i\times r_i}\) is anti-symmetric (cf. \Cref{rmk:system_odd}). 
If \(p=2\) and \(a=1\), we have to take \(A\) in a basis of the kernel of the equation solved in \Cref{lem:has_solution}. 
The entries of \(X_{(i,j)}\) with \(i<j\) can be chosen freely, whereas \(X_{(i,j)}\) for \(j<i\) is determined linearly by the equations for this matrix to be a block diagonal matrix with the same block structure as \(G\), that is, \((FGF^\intercal)_{(i,j)} = 0\) for all \(i < j\). 

\begin{example} \label{example:p=3}
Let \(p=3\) and consider the lattice \(L=\ZZ_3^3\) with diagonal Gram matrix \(G = \diag(3,9,9)\).
We compute generators of \(O(L/9L)\), which has order~\(2^4 3^5\) by \Cref{thm:order}.

Both \(\bar{q}_1\) and \(\bar{q}_2\) are non-hyperbolic \(3\)-elementary quadratic forms of dimension \(r_1 = 1\) and \(r_2 = 2\), respectively.
By \Cref{prop:order_orthogonal_elementary}, the group \(O(\rho_1(L))\times O(\rho_2(L))\) is of order \(2 \cdot 8\).
It is generated by the following matrices with coefficients in \(\IZ/3\IZ\) (all matrices are to be understood as block matrices, as indicated in the first one):
\newcommand{\rvline}{\hspace*{-\arraycolsep}\vline\hspace*{-\arraycolsep}}
\[   
    \begin{pmatrix} 
        2 &\rvline& \begin{matrix}0 & 0 \end{matrix} \\ \hline
   \begin{matrix} 0 \\ 0 \end{matrix} &\rvline 
   & \begin{matrix} 1 & 0 \\
    0 & 1 \end{matrix}
   \end{pmatrix},
 \begin{pmatrix}
    1 & 0 & 0 \\
    0 & 2 & 0 \\
    0 & 0 & 1
\end{pmatrix}, 
\begin{pmatrix}
    1 & 0 & 0 \\
    0 & 0 & 2 \\
    0 & 1 & 0
\end{pmatrix}.
\]

The quotient \(K_0/K_1\cong (\ZZ/3\ZZ)^2\) is generated by the reduction modulo~\(3\) of the following \(1\)-approximate matrices:
\[
    \begin{pmatrix}  
    1 & 1 & 0 \\
    6 & 1 & 0 \\
    0 & 0 & 1
   \end{pmatrix},
   \begin{pmatrix}  
   1 & 0 & 1 \\
   0 & 1 & 0 \\
   6 & 0 & 1
   \end{pmatrix}.
\]

Finally, \(K_1/K_2\cong (\ZZ/3\ZZ)^3\) is generated by the reduction modulo~\(9\) of the following \(2\)-approximate matrices:
\[
	\begin{pmatrix}  
		1 & 3 & 0 \\
		18 & 1 & 0 \\
		0 & 0 & 1
	\end{pmatrix},
	\begin{pmatrix}  
		1 & 0 & 3 \\
		0 & 1 & 0 \\
		18 & 0 & 1
	\end{pmatrix},
	\begin{pmatrix}  
		1 & 0 & 0 \\
		0 & 1 & 3 \\
		0 & 6 & 1
	\end{pmatrix},
\]
the \((2,2)\)-block of the last matrix coming from \Cref{alg:hensel_modular_odd} (cf. \Cref{rmk:system_odd}).

To obtain generators for \(O(L/9L)\), we plug the eight matrices into the quadratic Hensel lifting algorithm, obtain \(2\)-approximate lifts and view them modulo \(9\). 
The last three generators are already \(2\)-approximate. 
For the first five, we obtain 
\[  \begin{pmatrix}  
   8 & 0 & 0 \\
   0 & 1 & 0 \\
   0 & 0 & 1
   \end{pmatrix},
 \begin{pmatrix}
    1 & 0 & 0 \\
    0 & 8 & 0 \\
    0 & 0 & 1
\end{pmatrix}, 
\begin{pmatrix}
    1 & 0 & 0 \\
    0 & 0 & 8 \\
    0 & 1 & 0
\end{pmatrix},
\begin{pmatrix}  
   4 & 1 & 0 \\
   24 & 4 & 0 \\
   0 & 0 & 1
\end{pmatrix},
\begin{pmatrix}  
   4 & 0 & 1 \\
   0 & 1 & 0 \\
   24 & 0 & 4
   \end{pmatrix}.
\]

Looking at the image of the generators in \(O(L^\sharp)\), we obtain generators for the latter. 
It holds \(O(L^\sharp) = 2^4 3^3\), as we will confirm in \Cref{thm:discr-order}.
Indeed, only the first two generators coming from \(K_1/K_2\) belong to the kernel \(K^\sharp\) of the surjective map \(O(L/9L) \rightarrow O(L^\sharp)\).
\end{example}

The following theorem solves the problem of determining the order of \(O(L^\sharp)\). This is also useful to double-check that the generators obtained indeed generate the whole group.

\begin{theorem} \label{thm:discr-order}
Let \((L,q) = \bigoplus_{i \geq 0} (L_i, p^i q_i)\) be a Jordan decomposition of an integral \(\IZ_p\)-lattice \(L\) of rank \(r\), with \(L_i\) of rank \(r_i\). Set
\[
    w = \sum_{i>0} (i-1) \frac{r_i(r_i - 1)}{2} + \sum_{i < j} i r_i r_j .
\]
If \(p \neq 2\), then
\[
    \# O(L^\sharp) = p^{w} \prod_{i > 0} \#O (\bar{q}_i).
\]
If \(p = 2\), then
\[  
     \# O(L^\sharp) = 2^{w-t_1} \prod_{i > 0} \begin{Cases} \#O(\bar{q}_i)2^{t_0r_i} & \text{for } L_i \text{ free} \\ \#O(\bar{b}_i)2^{(t_0-1)r_i} & \text{for }L_i\text{ bound} \end{Cases} 2^{t_i-s_i}.
\]
where \(s_i\) and \(t_i\) are defined as in \Cref{thm:order}.
\end{theorem}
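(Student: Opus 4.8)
The plan is to combine \Cref{thm:order} with a computation of the kernel of the reduction map to $O(L^\sharp)$. Fix an integer $n\ge\max\{i:L_i\neq 0\}$. By the surjectivity of $O(L/p^nL)\to O(L^\sharp)$ (Nikulin, as cited above) its kernel $K^\sharp$ satisfies $\#O(L^\sharp)=\#O(L/p^nL)/\#K^\sharp$, and \Cref{thm:order} evaluates the numerator; so the entire content of the theorem is an explicit formula for $\#K^\sharp$.

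First I would pin down $K^\sharp$. Writing $L^\sharp=\bigoplus_{i\ge 1}(p^{-i}L_i)/L_i$ from the chosen Jordan decomposition and letting a $G$-compatible isometry $F=(F_{(i,j)})$ act on $L^\vee/L$, one checks that $F$ acts trivially on $L^\sharp$ exactly when, for every $i\ge 1$ and every $j$, one has $F_{(i,i)}\equiv I$ and $F_{(i,j)}\equiv 0$ $(j\neq i)$ modulo $p^{i}$; there is no condition on the block row $i=0$, since the unimodular constituent $L_0$ contributes nothing to $L^\sharp$. Hence $K^\sharp$ is the image in $O(L/p^nL)$ of the subgroup $\widehat K^\sharp\subseteq O(L)$ cut out by these congruences, and since $K_n\subseteq\widehat K^\sharp$ for $n$ in the stated range, $\#K^\sharp=[\widehat K^\sharp:K_n]$.

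Next I would compute $[\widehat K^\sharp:K_n]$ through the filtration $\widehat K^\sharp\supseteq\widehat K^\sharp\cap K_1\supseteq\widehat K^\sharp\cap K_2\supseteq\dots\supseteq K_n$. The bottom index $[\widehat K^\sharp:\widehat K^\sharp\cap K_1]$ is the order of the image of $\widehat K^\sharp$ in $O(L/pL)$: for $p$ odd this is $\#O(\bar q_0)$ times the contribution $p^{r_0(r-r_0)}$ of the free block row $i=0$, and for $p=2$ it is the analogous quantity corrected by the oddity-vector condition \eqref{eq:approximate_p=2_a=1}. Each higher quotient $\widehat K^\sharp\cap K_a/\widehat K^\sharp\cap K_{a+1}$ is a finite $\FF_p$-vector space, namely the solution space of the linearized isometry equations of the relevant Hensel step (\Cref{alg:hensel_modular_odd} or \Cref{alg:hensel_modular_even}, cf. the description of generators of $K_a/K_{2a}$) subjected to the extra requirement that the block rows $i>a$ be frozen to the identity; for $p$ odd its dimension works out to $\sum_{i\le a}\bigl(\binom{r_i}{2}+r_i(r-r_0-\dots-r_i)\bigr)$, and the $p=2$ case differs only in the diagonal bookkeeping. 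Summing these, multiplying by the bottom index, and dividing $\#O(L/p^nL)$ by the result, the $n$-dependent factors and the factors attached to $L_0$ cancel, and a polynomial identity in the $r_i$ (together with the relation $\#\bar q_i^{-1}(\{0\})\,\#O(\bar q_i)=\#O(\bar b_i)$ of \Cref{rmk:[O(b):O(q)]}, used exactly where it was used in \Cref{thm:order}) turns the remaining exponent into $w$ and the remaining product into the stated one. Equivalently, one can run the single-constituent peeling of the proof of \Cref{thm:order}: the embedding $L_0\hookrightarrow L$ is unconstrained by $K^\sharp$, so $\#K^\sharp(L)=a\cdot\#K^\sharp(L_{>0})$ with $a$ the same quantity as there (here the ambiguity in the extension is absorbed by the surjectivity of $O(L_0^\perp)\to O((L_0^\perp)^\sharp)$), and after rescaling $L_{>0}$ by $p^{-1}$ the recursion repeats with the freezing pushed up one level.

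As for \Cref{thm:order}, the main obstacle is the $p=2$ case: one must carry the parities $t_i$, the free/bound dichotomy, the corrections $s_i$ and the $-t_1$, and above all verify that the oddity-vector condition \eqref{eq:approximate_p=2_a=1} propagates through every Hensel step with the frozen blocks in place — this is precisely where \Cref{lem:strange} and the oddity-vector computations from the proof of \Cref{prop:surj} re-enter, and where the frozen diagonal blocks force the bookkeeping into the shape stated. A useful consistency check is that, although $\bar q_i$ for bound $i$ depends on the Jordan decomposition (\Cref{def:qi}), every quantity in the final formula is canonical (\Cref{def:rho_i(L)}).
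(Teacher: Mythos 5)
Your overall strategy is exactly the paper's: compute \(\#O(L/p^nL)\) by \Cref{thm:order}, identify the kernel \(K^\sharp\) of \(O(L/p^nL)\to O(L^\sharp)\) by the block congruences \(F_{(i,j)}\equiv \delta_{ij}I\bmod p^i\) (no condition on block row \(0\)), and filter it through the \(K_a\). Your identification of \(K^\sharp\) agrees with the paper's (which works with \(F\) modulo \(G\IZ_p^{r\times r}\)), and your dimension count for the graded pieces in the case \(p\neq 2\) sums to the same exponent \(a=\sum a_{ij}\) that the paper obtains, so that half of the argument is essentially complete.

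The gap is the \(p=2\) bottom index \([\widehat K^\sharp:\widehat K^\sharp\cap K_1]\), which you dismiss as ``the analogous quantity corrected by the oddity-vector condition \eqref{eq:approximate_p=2_a=1}.'' That is both incomplete and slightly mislocated: this is precisely where every distinctive term of the \(p=2\) formula (\(-t_1\), the exponents \(t_0r_i\) versus \((t_0-1)r_i\), the free/bound dichotomy) is produced, and the corrections do not come from the oddity congruence \eqref{eq:approximate_p=2_a=1} alone. In the paper's computation of \(\#(K_0^\sharp/K_1^\sharp)=2^{c'}\), one writes \(f_i=\id_{L_i}+2^i\sum_j\hat f_{ij}\) and extracts from the mod-\(2\) isometry and orthogonality equations the genuine linear constraints \(\bar b_1(\bar f_{01}\bar x_0,\bar v_1)=0\) (giving \(t_1r_0\) conditions) and \(\bar f_{0i}\bar v_0=0\) for \(i>0\) (giving \(t_0(r-r_0)\) conditions), the latter obtained only after using the orthogonality relation to transfer a condition on \(\bar f_{i0}\) into one on \(\bar f_{0i}\); one must also subtract the overlap \(t_0t_1\) when both \(v_0\) and \(v_1\) are nonzero. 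Likewise the correction \(a_{ii}'=a_{ii}+t_i\) for \(i\in\{0,1\}\) comes from the \(\delta_{z\neq 0}\) in the solution-space dimension of \Cref{lem:has_solution}, not from an oddity check. Without carrying out this bookkeeping your plan does not yet yield the stated exponent \(w-t_1+\sum_i(t_i+(t_0-e_i)r_i-s_i)\); as written it would only reproduce the \(p\neq 2\) shape of the formula.
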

\begin{proof}
Let \(n\) be an integer such that \(L_{m} = 0\) for all \(m > n\). Without loss of generality, we can suppose \(n \geq 2\).
Choose a basis \(B\) of \(L\) respecting the Jordan decomposition and let \(G\) be the corresponding Gram matrix. 
Given an isometry \(f\colon L \to L\), let \(F\) be the matrix representing \(f\) with respect to \(B\). With respect to the dual basis \(B^\vee = B G^{-1}\), \(f_\QQ\) is represented by \(G^{-1}FG=F^{-\intercal}\). 
Note that the inclusion \(L \hookrightarrow L^\vee\) is represented by the rows of \(G\).
Therefore,
\[
    L^\sharp \cong \ZZ_p^r / \ZZ_p^r G \cong \prod_i (\ZZ_p/p^i \ZZ_p)^{r_i}.
\] 
Hence, the map \(L^\sharp \to L^\sharp\) induced by \(f^{-1}\) is given by \(F^\intercal\), with \((F^\intercal)_{(i,j)} = (F_{(j,i)})^\intercal\) viewed modulo \(p^j\). 
Let \(K^\sharp\) be defined by the exact sequence 
\[
    1 \longrightarrow K^\sharp \longrightarrow O(L/p^nL) \longrightarrow O(L^\sharp)\to 1
\]
and set \(K_a^\sharp = \ker(K_a/K_n \to O(L^\sharp)) \subseteq O(L/p^nL)\) for \(a \in \{0,\ldots,n\}\). 
We have 
\[
    K_1^\sharp \subseteq K_0^\sharp \subseteq K^\sharp.
\]
Thanks to \Cref{thm:order}, we only need to compute the order of \(K^\sharp\). 
Note that the order of \(K_1^\sharp\) is given by the number of \(n\)-approximate matrices \(F\) modulo \(G\ZZ_p^{r\times r}\) (since we do not transpose \(F\), we write \(G\) on the left) satisfying \(F \equiv I \bmod p\). The strictly lower triangular part of \(F\) is determined by the rest because of \cref{eq:approximate_all_p}. 

Assume that \(p \neq 2\). We have \(\#K_1^\sharp = p^a\) with \(a = \sum_{i\leq j} a_{ij}\),
where each \(a_{ij}\) is the contribution of the block \(F_{(i,j)}\) (which we view modulo \(p^i\)) of the matrix \(F\) (defined modulo \(p^n\)), given by
\[a_{ij}=\begin{cases}
    (n-1)r_0(r_0-1)/2 & \text{ for \(0 = i = j\)}, \\
    (n-1)r_0r_j & \text{ for \(0=i<j \leq n\)}, \\
    (n-i)r_i(r_i-1)/2 & \text{ for \(0 < i = j \leq n\)}, \\
    (n-i)r_ir_j & \text{ for \(0 <i < j \leq n\)}.
\end{cases}\] 
Furthermore, it holds \(\# K_0^\sharp/K_1^\sharp=p^c\) with \(c=r_0(r-r_0)\). 
By \Cref{thm:order}, the order of \(O(L/p^nL)\) is \(p^b \prod_i \#O(\rho_i(L))\) with \[b = (n-1)\frac{r(r-1)}{2} + \sum_{i<j}r_ir_j = (n-1)\sum_{i} \frac{r_i(r_i-1)}{2} + n \sum_{i<j} r_i r_j.\] 
A calculation shows that \(b - a - c = w\), with \(w\) defined as in the statement. 
Since \(K^\sharp/K_0^\sharp\cong O(\rho_0(L))\), we have the claim for \(p\neq 2\). 
 
Assume now that \(p = 2\). We set \(e_i=1\) if \(L_i\) is bound and \(e_i=0\) else.
By \Cref{thm:order}, the order of \(O(L/p^nL)\) is \(2^{b'}\prod_{i}\#O(\rho_i(L))\), with \[b' = b + \sum_{i=0}^n (t_i - e_i r_i - s_i) = b + (t_0 - r_0 t_1 + t_0t_1) + \sum_{i=1}^n(t_i - e_i r_i-s_i).\]

As before, we have \(\# K_1^\sharp=2^{a'}\) with \(a' = \sum_{i \leq j} a_{ij}'\), where \(a_{ij}'=a_{ij}\) for \(i<j\), \(a_{ii}'=a_{ii}+t_i\) for \(0\leq i \leq 1\) by \Cref{lem:has_solution}, and \(a_{ii}'=a_{ii}\) for \(i>1\).
Therefore, we have \[b' - a' = w + r_0(r-r_0)-r_0t_1+t_0t_1-t_1+\sum_{i=1}^n (t_1-e_ir_i-s_i).\]

Now, we compute the order of \(K_0^\sharp/K_1^\sharp\) which we write as \(2^{c'}\).
We write \(f\in \GL(L)\) as \(f = \bigoplus_i f_i = \bigoplus_{i,j} f_{ij}\) with \(f_i\colon L_i \to L\) and \(f_{ij}\colon L_i \to L_j \).
We write \(f \equiv f' \bmod 2^i\) if \((f-f')(L) \subseteq 2^i L\).
Putting
\[S =\{f \in O(L) \mid \text{\(f_{00} \equiv \id_{L_0} \bmod\, 2\), and \(f_{i} \equiv \id_{L_i} \bmod \, 2^i\) for \(i > 0\)}\},\]
we have \(K_0^\sharp/K_1^\sharp \cong \{f \otimes \FF_2 | f \in S\}\).
For \(f \in S\), we can write \(f_i = \id_{L_i} + 2^i \sum_{j} \hat f_{ij}\) with \(\hat f_{ij}\colon L_i \to L_j\) and set \(\bar f_{ij}=\hat f_{ij} \otimes \id_{\FF_2}\). 

In what follows, we derive linear conditions on \(\bar f_{0i}\) for \(i>0\) from the fact that \(f \in O(L)\) is an isometry. 
For any \(x_0 \in L_0\), we obtain using \(\bar f_{00}=\id\) and \(f_{0i}x_0 \in L_i\) that \[q(f_0x_0)-q(x_0)\equiv q(f_{01}x_0) \mod 2.\]
Thus, \(f_0\) being an isometry implies the following linear condition for all \(x_0 \in L_0\):
\begin{equation}\label{eq:r0t1}
    \bar{b}_1(\bar f_{01}\bar x_0,\bar v_1)=0.
\end{equation}
Similarly, we obtain for all \(i>0\) and \(x_i \in L_i\) that
\[  q(f_ix_i)-q(x_i)
= b(2^i\hat f_{ii}x_i, x_i) +2^{2i}q(\hat f_{ii}x_i)+ \sum_j q(2^i\hat f_{ij}x_i)
\equiv 2^{2i}q(\hat f_{i0}x_i) \mod 2^{2i}\]
Thus, \(f_i\) being an isometry implies that for all \(i > 0\),
\begin{equation}\label{eq:diagonal-cond}
    \bar{b}_0(\bar f_{i0}\bar x_i,\bar v_0)=0.
\end{equation}

For \(i > 0\), we have the orthogonality relation
\[0 = b(f_0x_0,f_ix_i)= \sum_k b(f_{0k}x_0, f_i x_i)= b(f_{0i}x_0, x_i) + \sum_k 2^i b( f_{0k}x_0, \hat f_{ik} x_i).\] 
Modulo \(2^{i+1}\), only the \(k=0\) term in the sum survives and we obtain 
\[b(f_{0i}x_0, x_i) + b(f_{00}x_0,f_{i0}x_i) \equiv 0 \mod 2^{i+1}.\]
Using that \(\bar f_{00}=\id\) and dividing by \(2^i\), we obtain the equivalent relation
\[\bar{b}_i(\bar f_{0i} \bar x_0, \bar x_i) =\bar{b}_0(\bar x_0,\bar f_{i0}\bar x_i).\]
We can now reformulate \cref{eq:diagonal-cond} as a condition on \(\bar f_{0i}\), namely 
\(\bar{b}_0(\bar x_i,\bar f_{0i}\bar v_0) = 0\) for all \(i>0\) and all \(x_i \in L_i\). Since \(\bar{b}_0\) is non-degenerate, this means that
\begin{equation}\label{eq:t0ri}
    \bar f_{0i} \bar v_0 = 0 \quad \text{for all } i>0.
\end{equation}

From \cref{eq:t0ri} we obtain \(r_i\) linear conditions on \(\bar f_{0i}\) if \(v_0 \neq 0\), i.e. a total of \((r-r_0)t_0\) conditions on \(f\) and from \cref{eq:r0t1}, letting \(x_0\) run through a basis of \(L_0\), another \(t_1r_0\). 
However, if both \(v_0\) and \(v_1\) are nonzero, that is \(t_0t_1=1\), then we can take \(v_0\) as part of the basis of \(L_0\) and hence \cref{eq:t0ri} implies one of the \(t_1r_0\) equations of \cref{eq:r0t1}. 
Therefore, we have \(c' = r_0(r - r_0) - t_0 (r-r_0) - t_1r_0 + t_0t_1\) and
\[b'-a'-c'=w  -t_1 + t_0(r-r_0)+ \sum_{i=1}^n(t_i - e_i r_i-s_i)= w -t_1 + \sum_{i=1}^n(t_i + (t_0-e_i) r_i-s_i).\]

This yields the claim for \(p = 2\) and concludes the proof.
Note that \(b'-a'-c'\) does not depend on \(r_0\) anymore, as it should. 
\end{proof}

The following lemma describes how to obtain generators for \(K_0/K_1\).

\begin{lemma} \label{lem:K0}
Let \(L = \bigoplus_i (L_i,2^iq_i)\) be a Jordan decomposition of an integral \(\IZ_2\)-lattice. 
In the space of compatible maps \(f = \bigoplus_{ij} 2^{\max(0,j-i)}f_{ij}\) with \(f_{ii} = \id_{L_i}\), the group \(K_0/K_1\) is cut out by the following equations modulo~\(2\), which holds for all \(i<j\) and \(x_i \in L_i\), \(x_j \in L_j\):
    \begin{gather}
        \sum_{k=i}^j b_k(f_{ik}x_i,f_{jk}x_j) \equiv 0 \label{eq:1-adaptedK0a} \\ 
        b_{i}(x_i,f_{i-1,i}v_{i-1}) + b_{i+1}(f_{i,i+1}x_i,v_{i+1}) \equiv 0 \label{eq:1-adaptedK0b} \\
        b_{i-2}(f_{i,i-2}v_i,v_{i-2}) + q_{i-1}(f_{i,i-1}v_i) + q_{i+1}(f_{i,i+1}v_i) + b_{i+2}(f_{i,i+2}v_i,v_{i+2}) \equiv 0 \label{eq:1-adaptedK0c}
    \end{gather}
\end{lemma}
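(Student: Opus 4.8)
The plan is to unwind the definition of $K_0$, namely $K_0 = \{F \in \IZ_p^{r\times r} \mid FGF^\intercal = G,\ F_{(i,i)} \equiv I \bmod 2 \text{ for all } i\}$, and to describe its image $K_0/K_1$ in $O(L/2L)$ via the conditions that the triple $(F,G,G)$ be $1$-approximate (\Cref{def:approximate}), together with the block-diagonal reduction hypothesis $F_{(i,i)}\equiv I\bmod 2$. Since an element of $K_0/K_1$ is, by the identifications set up before \Cref{lem:strange}, precisely a class $(F\bmod 2)$ of a $1$-approximate, $G$-compatible matrix $F$ with $F_{(i,i)}\equiv I\bmod 2$, the three families of congruences defining ``$1$-approximate'' translate directly into linear conditions over $\FF_2$ on the off-diagonal blocks $f_{ij}$ (with $i<j$ the free blocks and $j<i$ determined, as explained in the paragraph preceding the lemma). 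So the work is: take the three defining congruences \cref{eq:approximate_all_p}, \cref{eq:approximate_p=2_all_a}, \cref{eq:approximate_p=2_a=1} in the case $a=1$, $Z=G$, substitute the compatible shape $f=\bigoplus_{ij}2^{\max(0,j-i)}f_{ij}$ with $f_{ii}=\id$, expand $(FGF^\intercal)_{(i,j)}$, and read off what survives modulo the relevant power of $2$.

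First I would treat \cref{eq:approximate_all_p} for the off-diagonal blocks $i<j$: expanding $(FGF^\intercal)_{(i,j)} = \sum_k 2^{\max(0,k-i)}f_{ik}\,G_{(k)}\,2^{\max(0,k-j)}f_{jk}^\intercal$ and dividing by $2^j$ (the target is $0\bmod 2^{1+j}$), only the terms with $i\le k\le j$ contribute nontrivially modulo $2$, and one recovers $\sum_{k=i}^j b_k(f_{ik}x_i,f_{jk}x_j)\equiv 0$, which is \cref{eq:1-adaptedK0a}. (The diagonal blocks $(i,i)$ of \cref{eq:approximate_all_p}, i.e. $A_{(i,i)}\equiv 0\bmod 2^{i+1}$, are automatically implied by the equations \cref{eq:1-adaptedK0a} together with $f_{ii}=\id$ — or rather follow from the isometry $FGF^\intercal=G$ once one uses \Cref{lem:strange}–type bookkeeping — so they contribute nothing new.) Next, the diagonal-entry condition \cref{eq:approximate_p=2_all_a}, $((FGF^\intercal - G)_{(i,i)})_{k,k}\equiv 0\bmod 2^{i+2}$, is the statement that the oddity vector $v_i$ of $G_{(i)}$ is preserved in a refined sense; expanding the diagonal of $(FGF^\intercal)_{(i,i)}$ and dividing by $2^{i+1}$, the surviving terms come from the interaction of $f_{i,i-1}$ and $f_{i,i+1}$ with $v_{i\mp 1}$, giving \cref{eq:1-adaptedK0b} (rewritten by non-degeneracy of $\bar b_i$, $\bar b_{i+1}$ as a condition relating the $(i-1,i)$- and $(i,i+1)$-blocks). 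Finally the oddity condition \cref{eq:approximate_p=2_a=1}, $v_i(FGF^\intercal - G)_{(i,i)}v_i^\intercal\equiv 0\bmod 2^{i+3}$, after dividing by $2^{i+1}$ and keeping only terms surviving modulo $4$, produces exactly the four-term relation \cref{eq:1-adaptedK0c} involving $f_{i,i-2}, f_{i,i-1}, f_{i,i+1}, f_{i,i+2}$ acting on $v_i$; here one uses that $2q_k \equiv b_k(\,\cdot\,,\,\cdot\,)$ on the diagonal and that $\bar q_{i-1}(f_{i,i-1}v_i)$ and $\bar q_{i+1}(f_{i,i+1}v_i)$ appear with coefficient $2^{2(i-1)}\cdot$(something) $=2^{i+1}\cdot$(something) contributing a genuine mod-$4$ term after dividing.

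The main obstacle is the careful $2$-adic bookkeeping in the last two families \cref{eq:1-adaptedK0b} and \cref{eq:1-adaptedK0c}: one must check that no farther-away blocks $f_{ik}$ (with $|k-i|\ge 3$, or $k$ on the ``wrong side'') contribute to the relevant residue, that the cross terms $b_k(2^{\max(0,k-i)}f_{ik}v_i, 2^{\max(0,k-i)}f_{ik}v_i)$ have the right $2$-adic valuation, and that passing from the diagonal entries of $FGF^\intercal$ to the quadratic expressions $q_k$ is done consistently (using that $\IZ_2$-integral modular lattices satisfy $2q(L_k)\subseteq 2^k\IZ_2$ and that the oddity-vector congruence $b(v,x)\equiv b(x,x)\bmod 2$ promotes $b(v,\cdot)$ to $2q(\cdot)$ where needed). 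Once each of the three defining congruences of $1$-approximateness has been translated, it remains only to observe that these are the sole conditions — the strictly-lower-triangular blocks $f_{ij}$ with $j<i$ being determined by $FGF^\intercal=G$ off the block diagonal, as noted in the text — so that $K_0/K_1$ is cut out inside the space of compatible maps with $f_{ii}=\id$ precisely by \cref{eq:1-adaptedK0a,eq:1-adaptedK0b,eq:1-adaptedK0c}. This finishes the proof.
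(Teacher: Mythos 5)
Your proposal follows essentially the same route as the paper's proof: identify \(K_0/K_1\) with the mod-\(2\) classes of \(1\)-approximate, \(G\)-compatible matrices with identity diagonal blocks, and translate the three defining congruences \cref{eq:approximate_all_p}, \cref{eq:approximate_p=2_all_a} and \cref{eq:approximate_p=2_a=1} (with \(a=1\), \(Z=G\)) into \cref{eq:1-adaptedK0a,eq:1-adaptedK0b,eq:1-adaptedK0c} by expanding the block decomposition and collecting powers of \(2\). The paper carries out exactly the bookkeeping you defer — in particular, the stated form of \cref{eq:1-adaptedK0b} is reached via the adjointness relation \(b_{i-1}(x_{i-1},f_{i,i-1}x_i)\equiv b_{i}(x_{i},f_{i-1,i}x_{i-1})\) extracted from the orthogonality conditions (not via non-degeneracy), and the \(q_{i\pm1}\)- and \(b_{i\pm2}\)-terms of \cref{eq:1-adaptedK0c} arise from the valuations \(i+1\) and \(i+2\) of the corresponding summands of \(q(f_iv_i)\) together with the oddity-vector identity \(2q_{i\pm2}(y)\equiv b_{i\pm2}(y,v_{i\pm2})\bmod 2\) — but your identification of which condition yields which family of equations is the same as the paper's.
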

\begin{proof}
Write \(f = \bigoplus f_i\) with \(f_i\colon L_i \to L\). Then, \(f\) is \(1\)-approximate if and only if for all \(i,j\) and \(x_i \in L_i\), \(x_j \in L_j\) it holds that \(b(f_i x_i, f_j x_j) \equiv 0 \bmod 2^{1+\max(i,j)}\), \(q(x_i)\equiv q(f_i x_i) \bmod 2\) and \(q(v_i) \equiv q(f_i v_i) \bmod 4\).
    
Writing \(f_i = \bigoplus_j 2^{\max(0,j-i)}f_{ij}\), we can reformulate these as conditions modulo~\(2\) on the \(f_{ij}\). For \cref{eq:1-adaptedK0a}, this is just a matter of collecting powers of~\(2\). We have
\begin{align*} 
    q(f_i(x_i)) 
    &\equiv \sum_{j} 2^{|j-i|}q_j(f_{ij}x_i) \mod 2 \\
    &\equiv 2 q_{i-1}(f_{i,i-1}x_i) +q_i(f_{ii}x_i)+ 2q_{i+1}(f_{i,i+1}x_i) \mod 2 \\
    &\equiv b_{i-1}(f_{i,i-1}x_i,v_{i-1}) +q_i(x_i)+ b_{i+1}(f_{i,i+1}x_i,v_{i+1}) \mod 2
\end{align*}
Further, \(b(f_{i-1}x_{i-1},f_ix_i) = 0\) yields \(b_{i-1}(x_{i-1},f_{i,i-1}x_i) \equiv b_{i}(x_{i},f_{i-1,i}x_{i-1}) \bmod 2\) and \cref{eq:1-adaptedK0b} follows from \(q(f_i(x_i)) = q_i(x_i)\).
A similar calculation shows that \(q(f_{i}v_i) \equiv q(v_i) \bmod 2^{i+2}\) yields \cref{eq:1-adaptedK0c}.
\end{proof}
\begin{remark}
     \Cref{eq:1-adaptedK0b} implies that \(b_i(v_i,f_{i-1,i}v_{i-1})\equiv b_{i-1}(f_{i,i-1}v_i,v_{i-1}) \equiv 0 \bmod 2\) for all \(i\). Indeed, inserting \(x_i=v_i\) in \cref{eq:1-adaptedK0b} yields that \(b_i(v_i,f_{i-1,i}v_{i-1}) = b_{i+1}(f_{i,i+1}v_i,v_{i+1}) \bmod 2\). Note that the left hand and the right hand side differ just by an index shift. But for \(i\) high enough, \(v_{i+1}=0\), which yields the claim. 
     
     In particular the middle contributions in \cref{eq:1-adaptedK0c} are integral, so that it is an equation over \(\ZZ_2/2\ZZ_2\).
\end{remark}

\begin{example} \label{example:p=2}
Let \(p = 2\) and consider the lattice \(L=\ZZ_2^4\) with diagonal Gram matrix \(G=\diag(1,2,2,4)\). 
We compute generators of \(O(L/4L)\) and of \(O(L^\sharp)\). 

With the notation of \Cref{thm:order}, we have \(r_0 = 1\), \(r_1 = 2\), \(r_2 = 1\), \(t_0 = t_1 = t_2 = 1\), \(s_0 = -1\), \(s_1 = 1\), and all other \(r_i\), \(t_i\), \(s_i\) are equal to \(0\). 
Moreover, it holds \(\bar{b}_0 = \bar{b}_2 = \bar \bw\) and \(\bar{b}_1 = \bar \bw^{\oplus 2}\).
Therefore, we can compute \(\#O(L/2L) = 2^2\) and \(\#O(L/4L) = 2^{11}\).
Indeed, \(O(L/2L)\) is generated by the reduction modulo~\(2\) of the matrices
\[
\begin{pmatrix}
    1 & 0 & 0 & 0\\
    0 & 0 & 1 & 0\\
    0 & 1 & 0 & 0\\
    0 & 0 & 0 & 1
\end{pmatrix},
\begin{pmatrix}
    1 & 3 & 3 & 1\\
    2 & 1 & 2 & 1\\
    2 & 2 & 1 & 1\\
    0 & 2 & 2 & 1
\end{pmatrix},
\]
the second one coming from \Cref{lem:K0}. In \(K_1/K_2\), we have nine generators whose reductions modulo \(4\) are equal to \(I + 2 E_{ij}\), with \(1\leq i \leq j \leq 4\) and \((i,j)\neq (2,3)\). 

By \Cref{thm:discr-order}, the order of \(O(L^\sharp)\) is \(2^3\). Indeed, all generators found belong to \(K^\sharp\), except the three generators coming from \(O(L/2L)\) or from \(I+2E_{4,4}\).
\end{example}

\begin{remark} 
We implemented the formulas from \Cref{thm:order,thm:discr-order} in a computer and computed generators for \(O(L/p^nL)\) and \(O(L^\sharp)\). 
Then, we compared the order of the groups generated with our formulas. 
We checked that they agree for all \(2\)-adic lattices of rank at most \(6\), determinant dividing \(2^{10}\) and \(n \in \{1,2\}\). 
This provides a sanity check that the formulas displayed are indeed correct and that we do not miss any generators in our implementation.
\end{remark}

\bibliographystyle{alpha}
\bibliography{literature}

\end{document}